\def\mid{\vert}
\newcommand{\eqref}[1]{(\ref{#1})}
\newtheorem{theorem}{Theorem}[section]
\newtheorem{lemma}[theorem]{Lemma}
\newtheorem{proposition}[theorem]{Proposition}
\newtheorem{corollary}[theorem]{Corollary}
\def\eps{\varepsilon}
\def\PP{\mathbb P}
\def\FF{\mathcal F}
\def\EE{\mathbb{E}}
\def\RR{\mathbb{R}}
\def\PH{\mathbb{R}^2_+}
\def\FR{\operatorname{Fr}}
\def\HH{\mathbb{H}^2 }
\def\BB{\mathcal{B} }
\def\MM{\mathcal{M} }
\begin{document}
\begin{frontmatter}

\title{Diffusion-limited aggregation on the hyperbolic~plane}
\runtitle{DLA on the hyperbolic plane}

\begin{aug}
\author[A]{\fnms{Ronen}~\snm{Eldan}\corref{}\ead[label=e1]{roneneldan@gmail.com}}
\runauthor{R. Eldan}
\affiliation{Microsoft Research}
\address[A]{Microsoft Research\\
1 Microsoft way\\
Building 99\\
Redmond, Washington 98052\\
USA\\
\printead{e1}} 
\end{aug}

\received{\smonth{10} \syear{2013}}
\revised{\smonth{3} \syear{2014}}

%
\begin{abstract}
We consider an analogous version of the diffusion-limited aggregation
model defined on the hyperbolic plane. We prove that almost surely the
aggregate viewed at time infinity will have a positive density.
\end{abstract}

%
\begin{keyword}[class=AMS]
\kwd{60K40}
\kwd{60K99}
\end{keyword}
\begin{keyword}
\kwd{Random cluster growth models}
\kwd{diffusion-limited aggregation}
\kwd{hyperbolic space}
\kwd{harmonic measure}
\end{keyword}

\end{frontmatter}

\section{Introduction}\label{sec1}

The celebrated \emph{Diffusion-limited aggregation} (in short, \textit
{DLA}) model is a probabilistic model where particles undergoing a
certain diffusion stick together and form up into clusters. Most
commonly, the aggregate begins with a single particle at a fixed point,
and in every iteration a new particle arrives via a Brownian motion (or
some random walk) starting from infinity and stops at the moment it
hits the existing cluster, thus expanding it. This model was first
introduced by Witten and Sandler \cite{WS} in 1981 as a model which
could be used to represent several physical phenomena related to
systems where the principle mean of transport of particles is by
diffusion. Some examples of systems which appear to have DLA-like
behavior are electro-deposition, mineral deposits, and dielectric
breakdown systems.

The most interesting settings for the DLA model are naturally the two-
and three-dimensional Euclidean spaces (or the grids $\mathbb{Z}^2$ and
$\mathbb{Z}^3$). In these spaces, determining some of the most basic
properties of this model seem to be notoriously hard problems. For
example, it is not known whether the rate of growth of the diameter of
the aggregate is not $O(n^{1/d})$ where $n$ is the number of particles
and $d$ is the dimension, or whether or not the density of the cluster
at time infinity is zero. It is conjectured by physicists that the
answers to both these questions are positive. One of the only known
facts about DLA in Euclidean space is the result of Kesten~\cite{K},
who obtained the upper bound $O(n^{2/\max(d,3)})$ for the speed of
growth of the diameter of the DLA in $\mathbb{Z}^d$.
We would also like to mention a paper of Barlow, Pemantle and Perkins \cite{BPP}
in which the DLA
model on a tree is studied as well as the work of Ebertz-Wagner \cite{naujas} in which it is shown that the
Euclidean DLA cluster will almost surely have infinitely many holes.

Roughly speaking, an analogous version of this model can be defined in
any space where the notion of diffusion exists. If the Poisson boundary
consists of one point (or, in other words, the definition of ``a
particle released at infinity'' makes sense) and the diffusion is
recurrent, the growth process can be defined so that law of the
location of a new particle is the harmonic measure of the existing
aggregate with pole at infinity. If the diffusion is transient (such as
in the case of $\mathbb{Z}^3$), one can consider the harmonic measure
with a pole far away from the aggregate, let the pole go to infinity
and take limits (i.e., conditioning on a random walk coming from
infinity to hit the cluster).

Another way to define the law of growth in settings where the diffusion
is transient is to use the \emph{time-reversibility} property of the
random walk. According to this property, the harmonic measure of a set,
with pole at infinity, is proportional to the so-called \emph
{equilibrium measure} associated to the set. For sets with sufficient
smoothness properties, this measure is absolutely continuous with
respect to the Hausdorff measure on the boundary of the set and its
density is proportional to the gradient, in the normal direction to the
boundary, of the solution of the Dirichlet problem with boundary
conditions $1$ on the set and $0$ at infinity. From a probabilistic
point of view, this density is roughly proportional to the probability
that a particle released close to the boundary of the set reaches
infinity before hitting the aggregate. Fortunately, this definition
also makes sense in settings where the Poisson boundary consists of
more than one point. A more detailed description of this will be given
in the next section.

Our aim in this paper is to study a DLA model defined on the \emph
{hyperbolic plane}, showing that in this case, the cluster at time
infinity almost surely admits a positive upper density. Our results
suggest that in the hyperbolic setting the behavior of the aggregate is
simpler to analyze than the Euclidean one. However, simulations point
that its geometry is still fairly complicated: it seems that the
so-called ``rich-get-richer'' behavior takes place also in this setting
and the aggregates look far from having a certain limit shape. Our
results may therefore be viewed as a modest attempt to rigorously study
certain properties of a model whose complexity is somewhat similar to
that of the Euclidean DLA. Diffusion-limited growth on general
Riemannian manifolds and specifically on the hyperbolic plane was
already considered in the physics literature, see \cite{CCB}; The
physical motivation for this study is that natural phenomena of
DLA-like behavior such as mineral dendrites, cell colonies and
cancerous tumors usually grow on curved surfaces.

In our construction, the particles will be metric balls of radius $1$.
We define $A_0$ to be a fixed point $p_0$ and recursively $A_{i+1} =
A_i \cup\{x\}$ where the point $x$ (thought of as the center of a
disc-shaped particle) will be picked from the set of points whose
distance from $A_i$ is exactly $2$ (which means exactly that the
corresponding discs will be tangent to each other) and will be
distributed in this set proportionally to the probability of escape to
infinity, described in the previous paragraph. We will also write
$A_\infty= \bigcup_{i=1}^\infty A_i$. The precise construction appears in
the next section.
Figure \ref{fig:hypdla} shows an instance of this construction drawn on
the Poincar\'e disc.

\begin{figure}

\includegraphics{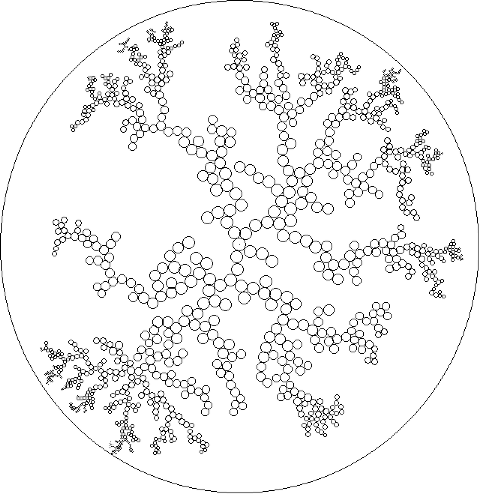}

\caption{A simulation of the DLA model with 1000 particles,
viewed on the Poincar\'e disc model.}  \label{fig:hypdla}
\end{figure}

In a metric measure space $X$ whose diameter is infinite, we say that a
locally-finite set $A \subset X$ has an \emph{upper density} greater or
equal to $c$ if there exists a point $p \in X$ and a sequence $R_1 <
R_2 < \cdots$ such that $R_i \to\infty$ as $i \to\infty$, such that
\[
\# \bigl(A \cap B(p, R_i) \bigr) \geq c \mu\bigl(B(p,
R_i)\bigr)\qquad \forall i \in\mathbb{N},
\]
where $B(p,r)$ is a metric ball centered at $p$ with radius $r$ and
$\mu
$ is the measure defined on $X$. We can use this definition
in the hyperbolic plane, using the standard hyperbolic distance as a
metric and the standard Riemannian volume of a set as a measure.

Our main theorem reads the following.

\begin{theorem} \label{mainthm}
The set $A_\infty= \bigcup_{i=1}^\infty A_i$ almost surely has an
upper density greater than $c$, where $c>0$ is a universal constant.
\end{theorem}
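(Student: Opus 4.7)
The plan is to track the random variable $X_n := \#A_n \cdot e^{-\rho_n}$, where $\rho_n := \max_{x \in A_n} d(p_0, x)$ is the outer radius of the aggregate at step $n$. Since $\mu(B(p_0, R)) = 2\pi(\cosh R - 1) \asymp e^R$ in $\HH$, the density of $A_n$ inside $B(p_0, \rho_n)$ is comparable to $X_n$, so Theorem \ref{mainthm} reduces to showing $\limsup_n X_n \geq c$ almost surely, for some universal $c > 0$.

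The principal ingredient is a harmonic-measure decay estimate: a single particle $x \in A_n$ with $d(p_0, x) = r$ contributes at most $C e^{-r}$ to the total harmonic measure from infinity. The geometric reason is that a unit disc at distance $r$ subtends a visual angle of order $e^{-r}$ from any point approaching the boundary at infinity, together with classical comparability between visual angle and harmonic measure (via extremal length in $\HH$). Summing over the ``fringe'' $F_R := \{x \in A_n : d(p_0, x) > R\}$ bounds the probability that the next particle lands outside $B(p_0, R)$ by $C e^{-R} \cdot \# F_R$. On the other hand, a disc-packing bound gives $\#\{x \in A_n : d(p_0, x) \leq R\} \leq C' e^R$. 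Combining these, one hopes to establish a dichotomy: either $X_n$ is already bounded below, or with probability at least some $c_0 > 0$ the next particle lands at distance $\rho_n + O(1)$ from $p_0$ rather than pushing $\rho_n$ outward by more than a constant. A martingale or drift argument on $X_n$ (or $\log X_n$) should then preclude $X_n \to 0$, yielding the desired $\limsup$.

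The main obstacle will be making the harmonic-measure decay estimate robust to arbitrarily complicated aggregate geometry. A priori, $A_n$ may have long, thin protrusions, and one must rule out the scenario where a handful of exposed tips far from $p_0$ together carry a disproportionately large fraction of the harmonic mass. The hyperbolic setting is exactly the reason to hope this works: to carry a constant fraction of the harmonic measure, an ensemble of tips at distance $r$ must have cardinality of order at least $e^r$, which is essentially the positive-density statement we are trying to prove. Turning this self-referential intuition into a rigorous bootstrap --- via decomposition of $A_n$ into distance shells around $p_0$ and careful harmonic-measure bounds on each shell --- is where the bulk of the work will lie, and where any attempt to adapt known Euclidean DLA arguments (for which the analogous conjecture is still open) is likely to fail unless the exponential volume growth of $\HH$ is exploited in an essential way.
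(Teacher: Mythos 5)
Your proposal rests on the estimate that a particle at hyperbolic distance $r$ from $p_0$ carries at most $Ce^{-r}$ of the total harmonic measure; this is the load-bearing claim and it is not correct for the measure that actually defines this DLA. The visual-angle comparison you invoke controls the harmonic measure seen from a \emph{fixed} ideal boundary point $\xi \in \HH(\infty)$, and indeed a unit ball at distance $r$ subtends angle $\asymp e^{-r}$ from a generic $\xi$. But the paper's growth law uses the escape-to-infinity (equilibrium) measure $\MM_{\BB(A)}$, precisely because the Poisson boundary of $\HH$ has more than one point and there is no single ``harmonic measure from infinity.'' That equilibrium measure is only normalised by the total capacity $Cap(A)$, and the fraction of mass carried by a single exposed tip is $\Theta\bigl(1/Cap(A)\bigr)$, not $e^{-r}$. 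For a bare chain of particles reaching out to distance $r$ the capacity is $\Theta(r)$, so the tip carries $\Theta(1/r)$ of the harmonic mass and the aggregate grows outward at its tip with probability bounded away from zero; your drift argument on $X_n = \#A_n\, e^{-\rho_n}$ would then see $\rho_n$ advance by $\Theta(1)$ each step while $\#A_n$ advances by $1$, driving $X_n \to 0$. You flag the circularity yourself as the ``main obstacle,'' but the issue is not merely that the bootstrap is delicate: the exponential-decay input you want to bootstrap from is false.

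The paper resolves this by working in the Poincar\'e half-plane model and never attempting a global per-particle harmonic-measure bound. Instead it tracks the aspect ratio $R(t) = X(t)/Y(t)$ of the bounding box of the aggregate, proves (Lemma \ref{lemX}) that horizontal spread is bounded by the height of the aggregate's front, proves (Lemma \ref{lemY}) via conformal invariance that vertical growth has rate $\gtrsim Y/(X+Y)$, and combines these into a supermartingale-type drift on $R$ sampled at the doubling times of $X$ (Theorem \ref{mainsec4}). The geometric input is the ``bottleneck'' observation: to encircle a target rectangle the aggregate would have to traverse it, so low aspect ratio forces hits. This is a genuinely different decomposition from the distance-shell one you propose, and it is exactly engineered to sidestep the fact that there is no useful absolute bound on a single particle's harmonic mass.
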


\begin{remark}
The reader may suspect that the above theorem follows from a general
geometric fact about the hyperbolic plane and does not use any of the
randomness in the model. Alas, there is an example of a connected set
which is a union of balls of radius 1, whose convex hull is the entire
plane, but whose upper density is zero. Indeed, consider the following
``spiral'' set: take a point $p \in\HH$ and $\theta_0 \in T_p$ (where
$T_p$ is the tangent space at $p$) and consider the exponential map $e\dvtx
T_p \to\HH$. Define
\[
A = \bigcup_{\theta\in[0, \infty)} B_H\bigl(
\exp_p \bigl(X_\theta R(\theta )\bigr), 1\bigr),
\]
where $X_\theta$ is a unit vector in $T_p$ whose angle with $\theta_0$
is $\theta$, $B_H(p,r)$ is a geodesic ball of radius $r$ centered at
$p$ and $R(\theta)$ is an increasing function. It is not hard to verify
that if the function $R(\theta)$ goes to infinity fast enough, the set
$A$ will have the properties described above.
\end{remark}

In the vaguest sense, the intuition behind the fact that the behavior
of the DLA model in the hyperbolic plane is different from the
conjectured behavior in Euclidean space is related to the rate of decay
of the harmonic potential. Consider two particles located at distance
$L$ apart. The probability for two Brownian paths released from the two
particles to intersect at some point is exponentially decreasing with
$L$ which, in turn, roughly means that when growing an aggregate from
those two points simultaneously, these two aggregates will hardly
interact. In particular, the new particles added to any two given
``arms'' of our aggregate will grow farther away from each other at
linear speed. This means that the growth law of the aggregate is almost
``local'' in the sense that the subtree related to each new particle
added to the aggregate will only ever be affected by its immediate
neighborhood and, moreover, their interaction will decrease
exponentially with time. The absence of long-range interactions will
prevent the multiscale phenomena, expected in the Euclidean case, from
occurring in our case.

Specifically, the geometry of the hyperbolic plane makes it much harder
to isolate certain parts of the DLA and disallowing them to grow
further by creating \textit{fjords} which are too narrow for particles to
come through, which in turn means that the DLA will locally keep
growing at most of its parts and will eventually fill the whole space.

Let us now review the general plan of our proof, while trying to
explain how the aforementioned properties of hyperbolic geometry come
into play.

The main step of the proof will be to show that there exists a
universal constant $R_0>0$ such that for any metric ball $B$ of radius
$R_0$, there is a probability of at least $0.99$ that the aggregate
will intersect this ball, no matter how far the ball is from the
starting point of the aggregate.

The proof of this step relies heavily on the fact that the upper
half-plane, $\RR\times(0, \infty)$, is isometric to $\HH$ via a
conformal mapping (using the so-called Poincar\'e metric). Regarding
our aggregate on the upper half-plane and choosing the correct
embedding, this is easily reduced to showing that an aggregate which
begins at the point $(0,\eps)$ reaches, with a nonnegligible
probability, any rectangle of the form $\Psi= [-C, C] \times[1,2]$
where $C>0$ is a universal constant and $\eps$ is an arbitrarily small
positive number.

At this point, let us now try to further illustrate the difference
between Euclidean and hyperbolic geometry which we are going to
exploit: in order for the aggregate to never reach the rectangle $\Psi
$, it has to encompass $\Psi$, at least in the sense that $\Psi$ will
be contained in the convex hull of the aggregate before any point of
the $\Psi$ has a chance to be reached by it. In particular, the
aggregate has to reach one of the lines $\{x=\pm C\}$. Now, note that
any geodesic line connecting the starting points with these two lines
actually passes through $\Psi$. In other words, the rectangle $\Psi$
acts as bottleneck which prevents the aggregate from encompassing it.
It is easy to see that no analogous phenomenon takes place in the
Euclidean space.

\begin{remark}
As mentioned above, in the paper of Barlow, Pemantle and Perkins \cite
{BPP}, a diffusion-limited aggregation on an infinite regular tree is
studied. The fact that the hyperbolic space has a tree-like structure
may mislead the reader to think that the model studied in their paper
is closely related to our model, and that the two are therefore
expected to behave in the same way. While these two models are
superficially similar and both called DLA, their behavior is
nevertheless quite different. Remark that on the discrete tree, each
connected component of the complement of a given subtree looks exactly
the same. Thus, the tree counterpart of our process would be defined
such that the rate of growth of the aggregate is constant on all points
of its boundary, regardless of its geometry. By definition, this
aggregate will eventually fill the entire tree and it is not hard to
see that it would do it in a rather uniform way.
\end{remark}

Let us try to explain our strategy to formally establish the fact that
$\Psi$ is likely to be reached by the aggregate before one of the lines
$\{x=\pm C\}$ is reached.

The idea will be to establish bounds on the rate of growth of the
minimum encompassing rectangle of the aggregate, hence the maximal
$x$-coordinate of the aggregate at time $t$, denoted by $X(t)$, and the
maximal $y$-coordinate, denoted by $Y(t)$ (see Figure~\ref{figdef}
below). In order to prove that the aggregate reaches the rectangle~$\Psi
$, it will be enough to show that $X(t)$ does not grow much faster than
$Y(t)$. We will work with a continuous time $t \in\RR^+$, so that
growth of the cluster is according to an exponential clock whose rate
is proportional to the capacity, which ensures us that in small time
intervals the expected rate of growth in different parts of the cluster
is roughly independent (this is defined in Section~\ref{sec2}).

%
\begin{figure}[b]

\includegraphics{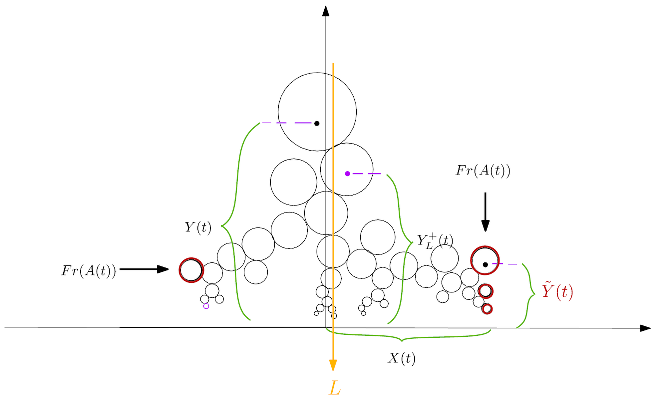}

\caption{The definitions $X(t), Y(t), Y_L^+(t), \FR(A(t))$
and $\tilde Y(t)$ illustrated.} \label{figdef}
\end{figure}

Two key geometric lemmas proven in Section~\ref{sec3} will provide an upper
bound for the rate of growth of $X(t)$ and a lower bound for the rate
of growth of $Y(t)$. The former bound, whose proof uses the easy fact
that in the half-plane model the $y$ coordinate of the center of metric
circle of radius $1$ is proportional to its Euclidean radius, roughly
says that $\frac{d}{dt} \EE[X(t)] < C Y(t)$. According to the latter
bound, which makes use of the conformal invariance, the probability of
$Y(t)$ to multiply itself by a constant during a unit time interval is
at least of the order $c Y(t) / (X(t) + Y(t))$ or, in other words,
roughly $d Y(t) > c Y^2(t) / X(t)$. Here, $c,C$ are universal constants.

Next, we note that (very informally) these bounds combined give
\[
d \frac{X(t)}{Y(t)} = \frac{d X(t)}{Y(t)} - \frac{X(t) \,d Y(t)}{Y(t)^2} \leq C - c.
\]
One would expect that by integrating those two bounds it should be
possible to attain an estimate of the form $Y(t) > X(t)^{\alpha}$ where
$\alpha$ is a positive constant which depends on the ratio $C/c$, at
least in expectation. However, it seems like the above bounds cannot be
pushed to give constants which would yield $\alpha\geq1$.

Because of this, we have to do something a little more complicated. We
define $\tilde Y(t)$ as the height of the cluster close to the edge
where $x$ attains its maximum (as in Figure~\ref{figdef}), and
consider two different cases: if $\tilde Y(t)$ is much smaller than
$Y(t)$, we get that $d X(t)$ is small enough so that the two bounds
above can be integrated to attain that $d \frac{X(t)}{Y(t)}$ is
negative. On the other hand, if $\tilde Y(t)$ and $Y(t)$ are
comparable, it turns out that we expect $X(t) / Y(t)$ to decrease due
to a completely different reason (provided that it is not too small).
We know that there is a nonnegligible probability that the height of
the cluster will grow rather rapidly close to its edge [hence close to
the place where $X(t)$ is attained] and, therefore, $Y(t)$ can multiply
itself by a constant within a constant amount of time. All of this is
carried out in Section~\ref{sec4}.

Once we have those two bounds, which can be combined into a unified
bound on the (expected) rate of growth of $R(t) = X(t) / Y(t)$ the
proof of the main step is just a matter of defining the correct
martingale and using the optional stopping theorem. Note, however, that
the process $X(t) / Y(t)$ cannot actually be a super-martingale as we
know that it is always positive, and it clearly does not converge.
Ideologically, this process should be regarded as a super-martingale
reflecting at zero, and for such processes, the optional stopping
theorem cannot help (it is not hard to see that Brownian motion with a
strong drift toward zero and reflection at zero can be almost surely
stopped at arbitrarily large values with a stopping time of finite
expectation). With a little extra work, we show that the process $x \to
R(\min\{t; X(t) > x \})$ is also a super-martingale with reflection at
zero and a strong enough drift, which turns out to be enough. In
Section~\ref{sec5}, we tie up the loose ends, showing how the main step can be
used to complete the proof.

\section{Preliminaries}\label{sec2}

\subsection{The Poincar\'{e} half-plane model}
We denote the hyperbolic plane by $\HH$. For two points $p_1,p_2 \in
\HH
$, we define the hyperbolic distance between them by $d_H(p_1,p_2)$. In
many cases, we will view the hyperbolic plane using the Poincar\'{e}
half-plane model, which is the usual open half plane $\PH:= \RR\times
(0, \infty)$ (sometimes called the Poincar\'{e} half-plane) equipped
with an embedding $H\dvtx \PH\to\HH$ and a distance function defined by
%
\begin{eqnarray}
\label{poincare} d_H\bigl((x_1,y_1),
(x_2,y_2)\bigr) &=& d_H\bigl(H(x_1,y_1),
H(x_2,y_2)\bigr)
\nonumber
\\[-8pt]
\\[-8pt]
\nonumber
& =&
\operatorname{Arcosh} \biggl(1+ \frac{(x_2-x_1)^2+(y_2-y_1)^2}{2 y_1 y_2} \biggr).
\end{eqnarray}
By slight abuse of notation, throughout this note we will sometimes
allow ourselves to interchange freely between the roles of $p$ and
$H(p)$, whenever the intention is clear from the context.

For a point $p \in\PH$, let $B_H(p, r) \subset\PH$ be the closed
$d_H$-ball centered at $p$ with radius $r$ and let $B_E(p, r) \subset
\PH$ be the closed Euclidean-ball centered at $p$ with radius $r$. We
will often use the following elementary estimate, which follows
immediately from formula (\ref{poincare}).

\begin{lemma} \label{ballslem}
For any $(x,y) \in\PH$, one has
\[
B_E\bigl((x,y), 0.5 y\bigr) \subseteq B_H\bigl((x,y),
1\bigr) \subseteq B_H\bigl((x,y), 2\bigr) \subseteq B_E
\bigl((x,y), 7 y\bigr).
\]
\end{lemma}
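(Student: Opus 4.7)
The plan is to reduce everything to the explicit description of hyperbolic disks in the Poincar\'e half-plane as Euclidean disks. Starting from the metric formula (\ref{poincare}), I would first show that for any $(x,y) \in \PH$ and any $r > 0$,
$$
B_H((x,y), r) = B_E\bigl((x,\, y\cosh r),\, y\sinh r\bigr).
$$
This is a short manipulation: the defining inequality $\cosh d_H \le \cosh r$ reads
$$
1 + \frac{(x'-x)^2 + (y'-y)^2}{2 y y'} \le \cosh r,
$$
and multiplying through by $2yy'$ and completing the square in $y'$ converts it into $(x'-x)^2 + (y' - y\cosh r)^2 \le y^2 \sinh^2 r$.

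Once this reformulation is in hand, each of the three inclusions in the lemma becomes a trivial Euclidean comparison between two concentric-or-nearly-concentric disks. For $B_E((x,y), 0.5y) \subseteq B_H((x,y), 1)$, the maximum Euclidean distance from the center $(x, y\cosh 1)$ of the outer hyperbolic ball to a point of $B_E((x,y), 0.5y)$ is $(\cosh 1 - 1 + 0.5)y$, and the required inequality $(\cosh 1 - 1 + 0.5)y \le y\sinh 1$ simplifies to $\cosh 1 - \sinh 1 = e^{-1} \le 1/2$, which holds. The middle inclusion $B_H((x,y), 1) \subseteq B_H((x,y), 2)$ is tautological. For $B_H((x,y), 2) \subseteq B_E((x,y), 7y)$, the maximum Euclidean distance from $(x,y)$ to any point of $B_E((x, y\cosh 2), y\sinh 2)$ is $(\cosh 2 - 1 + \sinh 2)y = (e^2 - 1)y$, which is less than $7y$ since $e^2 < 8$.

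There is no real obstacle here; the only thing to be careful about is keeping track of which disk is Euclidean and which is hyperbolic, and noting that the hyperbolic center $(x,y)$ is \emph{not} the Euclidean center of $B_H((x,y), r)$ (the Euclidean center sits higher, at $(x, y\cosh r)$). The numerical constants $0.5$ and $7$ in the statement are chosen precisely to make the two boundary inequalities $e^{-1} \le 1/2$ and $e^2 - 1 \le 7$ hold comfortably, leaving a little slack on each side.
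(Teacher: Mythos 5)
Your proof is correct and fills in exactly what the paper leaves implicit: the paper simply asserts that the lemma "follows immediately from formula (\ref{poincare})," and your identification $B_H((x,y),r) = B_E((x, y\cosh r), y\sinh r)$ together with the two numerical checks $e^{-1} \le 1/2$ and $e^2 - 1 \le 7$ is the standard way to make that precise.
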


Another basic fact of which we will make use quite often is the
invariance of the model to M\"{o}bius transformations leaving $\PH$ intact:

\begin{fact} \label{invariance}
For any constants $\alpha\in\RR$ and $\beta>0$ consider the transformation
\[
T\dvtx (x,y) \to(\beta x + \alpha, \beta y).
\]
Then $d_H$ is invariant under $T$, namely,
\[
d_H\bigl((x_0, y_0), (x_1,
y_1)\bigr) = d_H\bigl(T(x_0,
y_0), T(x_1, y_1)\bigr)
\]
for all $(x_0,y_0), (x_1,y_1) \in\PH$.
\end{fact}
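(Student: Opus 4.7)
The plan is a direct substitution into the explicit distance formula given in equation (\ref{poincare}). Since $\mathbf{Arcosh}$ is a fixed function, it suffices to show that the argument
$$
F((x_1,y_1),(x_2,y_2)) \;:=\; 1 + \frac{(x_2-x_1)^2+(y_2-y_1)^2}{2 y_1 y_2}
$$
is invariant under $T$. Rather than plug $T$ in all at once, for expositional clarity I would decompose $T$ as $T = T_\alpha \circ D_\beta$, where $D_\beta(x,y)=(\beta x, \beta y)$ is the dilation by $\beta$ and $T_\alpha(x,y)=(x+\alpha, y)$ is the horizontal translation by $\alpha$; one checks directly from the definition that $T_\alpha \circ D_\beta(x,y) = (\beta x + \alpha, \beta y)$.

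For the translation $T_\alpha$, the coordinate differences $x_2-x_1$ and $y_2-y_1$ as well as the values $y_1$ and $y_2$ are all preserved, so $F$ is manifestly unchanged. For the dilation $D_\beta$, the numerator $(x_2-x_1)^2+(y_2-y_1)^2$ is multiplied by $\beta^2$, and the denominator $2y_1 y_2$ is also multiplied by $\beta^2$; the factors cancel and $F$ is preserved. Composing these two invariances gives invariance of $F$, and hence of $d_H$, under $T$.

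There is no real obstacle here: the invariance is built into the very form of the Poincaré metric, where the factor $y_1 y_2$ in the denominator is precisely what absorbs the rescaling coming from $\beta$. An alternative route would be to observe that $T$ extends to the orientation-preserving Möbius transformation $z \mapsto \beta z + \alpha$ of $\PH$ (viewed as a subset of $\mathbb{C}$), which lies in $\mathrm{PSL}(2,\mathbb{R})$ and is therefore an isometry of $\HH$ by the general theory of the upper half-plane model; but the direct substitution above is shorter and keeps the argument self-contained within the formula already displayed as (\ref{poincare}).
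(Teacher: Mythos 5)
Your proof is correct; the paper states this fact without proof (treating it as standard), and your direct verification that the quantity inside $\mathbf{Arcosh}$ in formula (\ref{poincare}) is unchanged under horizontal translation and under dilation is exactly the computation the paper implicitly relies on.
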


We denote by $\HH(\infty)$ the set of ideal points (or omega points) of
the hyperbolic plane. We also define
\[
\PH(\infty) = \RR\times\{0\} \cup\{\infty\}.
\]
By continuity, we can extend an embedding $H\dvtx \PH\to\HH$ to the set
$\PH(\infty)$.

One last property of the Poincar\'e model which we will exploit is its
\emph{conformality}, namely, the fact that the map $H\dvtx \HH\to\PH$ is a
conformal map. Thanks to this fact and since, according to a theorem of
P. L\'{e}vy, the path of a Brownian motion is invariant under conformal
maps, we have the following.

\begin{fact}[(Conformal invariance)] \label{conformal}
Let $A \subset\HH$ be a measurable set and let $x \in\HH$ be any
point. The path of a hyperbolic Brownian motion starting at $x$ and
stopped when it reaches $A \cup\HH(\infty)$ has the same distribution
as the image under the map $H$ of the path of the usual Euclidean
Brownian motion defined on $\PH$ started at $H^{-1}(x)$ and stopped at
$H^{-1}(A) \cup\PH(\infty)$.
\end{fact}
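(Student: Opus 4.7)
The plan is to deduce this from the conformal invariance of planar Brownian motion (Lévy's theorem), combined with the defining property that a hyperbolic Brownian motion on $\HH$ is the diffusion with infinitesimal generator $\tfrac{1}{2} \Delta_\HH$, where $\Delta_\HH$ denotes the Laplace--Beltrami operator of the hyperbolic metric. A direct computation in the half-plane coordinates $(x,y)$ gives $\Delta_\HH = y^2\, \Delta_E$, where $\Delta_E = \partial_x^2 + \partial_y^2$ is the standard Euclidean Laplacian on $\PH$. Consequently, if we pull a hyperbolic Brownian motion $B^H_t$ back to $\PH$ via $H^{-1}$ and set $\tilde{B}_t := H^{-1}(B^H_t) = (\tilde{X}_t, \tilde{Y}_t)$, the generator of $\tilde{B}$ acts on smooth compactly supported functions as $\tfrac{1}{2} y^2 \Delta_E$, which is exactly the generator of a Euclidean Brownian motion on $\PH$ subject to a position-dependent time change with rate $y^2$.

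Next I would make the time-change rigorous. Applying Itô's formula to $\tilde{B}_t$, one checks that $\tilde{B}$ is a continuous semimartingale whose two coordinate martingale parts each have quadratic variation $\int_0^t \tilde{Y}_s^2\, ds$, with vanishing cross-variation. Dambis--Dubins--Schwarz then produces a standard Euclidean Brownian motion $W_s$ on $\PH$ started at $H^{-1}(x)$, together with a continuous strictly increasing random time change $\sigma(t) = \int_0^t \tilde{Y}_s^2\, ds$, such that $\tilde{B}_t = W_{\sigma(t)}$ up to the first exit from $\PH$. Since $\sigma$ is almost surely a homeomorphism of the relevant time intervals, the trajectories $\{\tilde{B}_s : 0 \leq s \leq t\}$ and $\{W_s : 0 \leq s \leq \sigma(t)\}$ coincide as subsets of $\PH$: a time reparameterization does not move the curve, only retraces it. Hence, for any measurable $U \subset \PH$, the first hitting time of $U$ by $\tilde{B}$ agrees with $\sigma^{-1}$ applied to the first hitting time of $U$ by $W$, and the two stopped paths have identical images.

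Taking $U = H^{-1}(A)$ and mapping forward under $H$ then yields the claimed identity of paths inside $\HH$; the only remaining issue is the behavior at the ideal boundary, which I expect to be the main obstacle. Hyperbolic Brownian motion does not literally reach $\HH(\infty)$ in finite time, whereas Euclidean Brownian motion on $\PH$ almost surely hits $\RR \times \{0\}$ in finite time. I would resolve this by exhausting $\PH$ by compact sets $K_n$ that stay uniformly away from $\PH(\infty)$, applying the matching of traces to the first exit time from $K_n \cup H^{-1}(A)$, and then letting $n \to \infty$. On the $\PH$ side the Euclidean Brownian motion almost surely reaches $\PH(\infty)$ in finite time, while on the $\HH$ side the standard transience of hyperbolic Brownian motion gives almost sure convergence to a random ideal point of $\HH(\infty)$; the time change $\sigma$ links these two limits, so both stopped trajectories agree as random curves in the limit, producing the claimed distributional equality.
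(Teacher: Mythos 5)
Your argument is correct and is, in effect, a self-contained proof of the special case of L\'evy's conformal-invariance theorem that the paper invokes without proof. The paper simply cites L\'evy's theorem as a black box (``according to a theorem of P.\ L\'evy, the path of a Brownian motion is invariant under conformal maps''), whereas you unpack it: the computation $\Delta_{\HH} = y^2\,\Delta_E$ in half-plane coordinates shows that the pull-back of hyperbolic Brownian motion has generator $\tfrac12 y^2 \Delta_E$, and the Dambis--Dubins--Schwarz (or Knight) time-change then exhibits it as a reparametrization of planar Brownian motion, so stopped traces agree as random curves. This is the standard proof strategy for L\'evy's theorem itself, so the two routes are mathematically equivalent; yours is simply more explicit and pedagogically useful.

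Two small points worth tightening if you were to write this out in full. First, DDS produces a planar Brownian motion on $\R^2$, not intrinsically on $\PH$; the statement ``$W$ on $\PH$'' should be read as ``$W$ stopped at its first exit from $\PH$,'' which is harmless because the trace of $W$ up to that time coincides with the trace of the pulled-back process $\tilde B$, which stays in $\PH$ for all finite $t$. Second, your handling of the ideal boundary is the crux of why the clocks match: the clock $\sigma(t)=\int_0^t \tilde Y_s^2\,ds$ has an a.s.\ finite limit $\sigma(\infty)$ precisely because hyperbolic Brownian motion converges a.s.\ to a boundary point (so $\tilde Y_t\to 0$ fast enough for the integral to converge), and that finite limit is exactly the first hitting time of $\PH(\infty)$ by $W$. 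This deserves to be stated explicitly rather than folded into a compact-exhaustion limit, since it is where the transient-in-infinite-time and recurrent-in-finite-time descriptions get reconciled. With those points filled in, the argument is complete.
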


\subsection{The harmonic measure}

As explained above, in Euclidean space, the DLA is usually defined via
particles arriving from infinity, or equivalently, the place of the
particle added to the aggregate is has a distribution whose law is the
harmonic measure on the boundary of the existing aggregate, with a pole
at infinity. Unfortunately, in the hyperbolic space, there is no
natural analogous definition, as the harmonic measure actually depends
on the point in $H(\infty)$ from which the particle is released (or, in
other words, the Poisson boundary contains more than one point). In
order to find a definition of a DLA growth model on the hyperbolic
plane that makes sense, we use the following fact which is a
consequence of the time reversibility of the Brownian motion (for a
proof, see \cite{IM}, page~252 and \cite{MP}, Theorem 8.33).

\begin{fact} \label{timereverse}
For any smooth set $A \subset\RR^n$, $n \geq3$, there exists a
constant $C_A$ such that for any $x \in\partial A$, one has
\[
C_A m_{A, \infty} (x) = \lim_{\varepsilon\to0^+}
\frac{1}{\varepsilon} \PP \biggl({ {\mbox{A Brownian motion released from } x+\vec{n}
\varepsilon} \atop\mbox{ reaches $\infty$ before hitting $A$ } }\biggr),
\]
where $\vec{n}$ is the normal direction to $\partial A$ at $x$,
pointing outward and $m_{A, \infty}(x)$ is the density of the harmonic
measure of the domain $A$ with pole at $\infty$ evaluated at the point $x$.
\end{fact}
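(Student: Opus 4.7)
The plan is to recognize both sides of the identity as, up to an $A$-dependent multiplicative constant, the outward normal derivative of the \emph{capacitary potential} $u_A(x) := \PP_x(\tau_A < \infty)$, where $\tau_A$ is the hitting time of $A$ by a standard Brownian motion in $\RR^n$.

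First I would handle the right-hand side. Since $n \geq 3$, Brownian motion is transient, so the event appearing there is exactly $\{\tau_A = \infty\}$ for the motion started at $x + \epsilon \vec{n}$, which has probability $1 - u_A(x + \epsilon \vec{n})$. Standard potential theory says $u_A$ is harmonic on $\RR^n \setminus A$, tends to $0$ at infinity, and, when $\partial A$ is smooth, extends smoothly up to $\partial A$ with boundary value $1$. A first-order Taylor expansion in the normal direction at $x \in \partial A$ therefore gives
$$
1 - u_A(x + \epsilon \vec{n}) \;=\; -\epsilon\, \partial_{\vec{n}} u_A(x) + O(\epsilon^2),
$$
so the limit on the right-hand side equals $|\nabla u_A(x)|$ (the outward normal derivative being negative, since $u_A$ attains its maximum on $\partial A$).

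Next I would identify the harmonic measure with pole at infinity as $C_A \cdot |\nabla u_A(x)|\, dS(x)$ on $\partial A$, where $dS$ denotes surface measure. The cleanest route is through Green's representation: applying Green's second identity to $u_A$ and to the Newtonian kernel $N(x,y) = c_n |x-y|^{2-n}$ on the annular region $B_R \setminus A$ and sending $R \to \infty$ (using the decay $u_A(z) \sim \mathrm{cap}(A)\, c_n\, |z|^{2-n}$ at infinity) yields
$$
u_A(x) \;=\; \int_{\partial A} c_n |x-y|^{2-n}\, \bigl(-\partial_{\vec{n}} u_A(y)\bigr)\, dS(y),
$$
which identifies the \emph{equilibrium measure} of $A$ as $\mu_A = -c_n^{-1}\, \partial_{\vec{n}} u_A \cdot dS$. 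To see that the harmonic measure with pole at infinity is $\mu_A/\mathrm{cap}(A)$, I would compare with the ordinary harmonic measure $m_{A,y}$ from a finite pole $y$, whose density with respect to $dS$ is the normal derivative (in the second variable) of the Dirichlet Green's function $G_{\RR^n \setminus A}(y,\cdot)$; the asymptotic $G_{\RR^n \setminus A}(y,z) \sim c_n |y|^{2-n} u_A(z)$ as $|y| \to \infty$ then gives, upon normalization, convergence of $m_{A,y}$ to a density proportional to $-\partial_{\vec{n}} u_A$.

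Combining the two identifications yields the stated identity, with $C_A$ the ratio of the two normalizations (which is indeed independent of $x$). The main technical obstacle is the Green's-function asymptotic at infinity which identifies the limit $m_{A,\infty}$ with the equilibrium measure; once this is granted, what remains is the elementary Taylor expansion above together with elliptic boundary regularity of $u_A$ up to the smooth surface $\partial A$.
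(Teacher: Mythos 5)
The paper does not actually prove this fact; it cites it as standard (It\^o--McKean, p.~252, and M\"orters--Peres, Theorem~8.33), so there is no internal argument to compare against. Your proposal is a legitimate derivation and takes the classical potential-theoretic route: compute the right-hand side as the outward normal derivative $-\partial_{\vec n} u_A$ of the capacitary potential $u_A(x)=\PP_x(\tau_A<\infty)$, identify the equilibrium measure via Green's representation, and then let the pole of the finite-pole harmonic measure go to infinity using the Green's function asymptotic. The approach in the cited references --- and the one hinted at by the fact being attributed to ``time reversibility'' in the surrounding text --- proceeds instead by a last-exit / time-reversal decomposition of the Brownian path, which identifies $m_{A,\infty}$ with the normalized equilibrium measure directly in probabilistic terms, with no PDE machinery; your argument buys a cleaner link to the analytic object $-\partial_{\vec n} u_A$ at the cost of needing the $R\to\infty$ Green's identity and a Green's-function asymptotic. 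One slip to fix: the asymptotic should read $G_{\RR^n\setminus A}(y,z)\sim c_n|y|^{2-n}\bigl(1-u_A(z)\bigr)$, not $c_n|y|^{2-n}u_A(z)$; indeed $G_{\RR^n\setminus A}(y,z)\to 0$ as $z\to\partial A$ while $u_A(z)\to 1$ there, and with your stated asymptotic the resulting boundary density would come out with the wrong sign. Taking the normal derivative (in the direction of $\vec n$, i.e.\ into the domain) of the corrected asymptotic recovers a density proportional to $-\partial_{\vec n} u_A(z)$, matching the equilibrium-measure identification, and the conclusion stands with $C_A=\mathrm{cap}(A)$.
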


Fortunately, the right-hand side of the above formula can be defined
just the same in the hyperbolic plane. Fix two measurable subsets $A,B
\subset\HH\cup\HH(\infty)$ such that $\HH(\infty) \subset A \cup B$
and fix a point $x \in\partial A \setminus H(\infty)$ such that
$\partial A$ is smooth at $x$. Denote by $T_x$ be the tangent space of
$\HH$ at $x$ and let $v \in T_x$ be the outward normal to $\partial A$
at $x$. Consider the exponential map $\exp_{x}\dvtx T_x \to\HH$. We define
\[
m_{A, B} (x) = \lim_{\varepsilon\to0^+ } \frac{1}{\varepsilon} \PP \biggl(
{{\mbox{A brownian motion released from } \exp_x(\varepsilon v ) } \atop
\mbox{ reaches $B$ before hitting $A$ } }\biggr).
\]
For all measurable $D \subset\partial A \setminus\HH(\infty)$, we define
\[
\MM_{A, B} (D) = \int_{D} m_{A,B} (x) \,d
\ell(x),
\]
where $\ell(\cdot)$ is the standard length measure in the hyperbolic
plane. We claim that the above integral is well defined and finite
whenever $A$ is a finite union of metric balls. Indeed, the boundary of
such a set is smooth up to a finite set of points, which means that the
above integral is well defined. Moreover, it is evident from the above
definition that $m_{A,B}$ admits the following monotonicity property:
for two sets $A' \subset A$ such that $x \in\partial A' \cap\partial
A$, one has $m_{A',B} (x) \geq m_{A,B}(x)$. Consequently, the function
$m_{A,B}(x)$ is bounded on $\partial A$ and the integral is finite.

\begin{remark}
In fact, this definition is valid for any set whose boundary is a
rectifiable curve (see \cite{P}, Example 1.2).
\end{remark}

Finally, when $A \cap\HH(\infty) = \varnothing$, we also abbreviate
%
\begin{equation}
\label{harmonic} \MM_A(D) = \MM_{A, \HH(\infty)}(D).
\end{equation}
In view of Fact \ref{timereverse}, it seems natural to construct our
DLA cluster using this measure.

\subsection{Construction of the DLA}

The evolution of our aggregate will be represented via a sequence of
random finite sets $A_1 \subset A_2 \subset\cdots,$ each element of which
is a point in $\HH$ represents a single particle. The particles are
assumed to be metric balls of radius $1$, and the elements of the above
sets are the centers of those metric balls, hence the actual aggregate
takes the form
\[
\bigcup_{p \in A_i} B_H(p, 1).
\]
We fix a point $p_0 \in\HH$ which we regard as the origin of the
aggregate. We begin with the set $A_0 = \{ p_0 \}$. The set $A_{i+1}$
will be the existing aggregate $A_i$ with the addition of one point
representing the center of the new particle. In order to define the law
according to which this new point is distributed, we will need some
more definitions.

For a finite set $A \subset\HH$, we define
\[
\BB(A) = \bigcup_{x \in A} B_H(x, 2).
\]
The point of taking balls of radius 2 is that any ball centered at a
point in $\partial\BB(A)$ whose radius is $1$ will be tangent to the
aggregate (which is assumed to be a union of balls of radius 1). Define
\[
\mu_A(\cdot) = \operatorname{Cap}(A)^{-1} \MM_{\BB(A)}( \cdot),
\]
where
\[
\operatorname{Cap}(A):= \MM_{\BB(A)} \bigl(\partial\BB(A)\bigr)
\]
is a normalizing constant to which we will refer to as the \emph
{capacity} of $A$ and where the measure $\MM_{\BB(A)}$ is defined in
equation (\ref{harmonic}). Note that by definition, the measure $\mu_A$
is a probability measure.

%
\begin{remark}
The quantity $\operatorname{Cap}(A)$ is sometimes referred to as the inverse \emph
{Riemann modulus} of $A$. It is a well known fact, which is a
consequence of Schottky's theorem that it is invariant under conformal
maps of the hyperbolic plane.
\end{remark}

We can finally define by recursion,
\[
A_{i+1} = A_i \cup\{X_i\},
\]
where $X_i$ is a random point in $\partial\BB(A_i)$ distributed
according to the law $\mu_{A_i}$.

Throughout this note, we will usually allow ourselves to interchange
freely between $A_i$ and $H^{-1} (A_i)$ (when this does not cause any
confusion), thus sometimes considering $A_i$ as a subset of $\PH$.

\subsection{Continuous time}
In our proofs, it will be more convenient to regard our process in
continuous time. We define a sequence of times $t_0, t_1, t_2,\ldots $ by
the following inductive law: Define $t_0 = 0$, and for all $i \geq0$,
let $t_{i+1} - t_{i}$ be an exponentially-distributed variable whose
expectation is $\operatorname{Cap}(A_i)^{-1}$, independent from all the rest. Finally,
we define
\[
A(t) = A_{i(t)},
\]
where
\[
i(t) = \max\{i; t_i \leq t \}.
\]
We denote by $\FF_t$ the filtration corresponding to the process. The
next fact will be useful to us:

\begin{fact} \label{newparticlefact}
The process $A(t)$ is a Markov process, hence for every random variable
$X$ measurable with respect to $\FF_\infty$ and every $t \geq0$,
\[
\EE[X| \FF_t] = \EE\bigl[X | A(t)\bigr].
\]
Moreover, for any $t$ and for any measurable $B \subset\partial\BB
(A(t))$, one has
%
\begin{equation}
\label{newparticle} \lim_{\varepsilon\to0^+}  \frac{1}{\varepsilon} \PP \bigl(
B \cap A(t + \varepsilon) \neq\varnothing\vert A(t) \bigr) = \MM_{\BB(A(t))}(B)
\end{equation}
and for all $B$ such that $B \cap\partial\BB(A(t)) = \varnothing$,
%
\begin{equation}
\label{newparticleout} \lim_{\varepsilon\to0^+} \frac{1}{\varepsilon} \PP \bigl(
B \cap \bigl(A(t + \varepsilon) \setminus A(t) \bigr) \neq\varnothing\vert A(t)
\bigr) = 0.
\end{equation}
\end{fact}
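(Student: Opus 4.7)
The plan is to recognize $A(t)$ as a continuous-time pure-jump Markov chain on the countable state space of finite subsets of $\HH$, and then read off each of the three assertions from the construction.

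First I would verify the Markov property. By construction, the inter-arrival times $t_{i+1}-t_i$ are independent exponentials with parameters $Cap(A_i)$ depending only on $A_i$, and, conditionally on $A_i$, the new particle $X_i$ is drawn independently according to $\mu_{A_i}$. Hence the joint law of $(A(s))_{s \geq t}$ is a measurable function of $A(t)$ together with fresh independent randomness: on the event $\{t_i \leq t < t_{i+1}\}$, the memoryless property of the exponential tells us that the residual waiting time $t_{i+1}-t$ is again exponential with parameter $Cap(A_i)=Cap(A(t))$, independent of $\FF_t$. A standard monotone-class argument then upgrades this to $\EE[X|\FF_t]=\EE[X|A(t)]$ for every $\FF_\infty$-measurable $X$.

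For equation (\ref{newparticle}), I would fix $A(t)=A$ and denote by $\tau$ and $X$ the time and location of the next particle. Then
\[
\PP\bigl(\tau \leq t+\epsilon,\, X \in B \,\big|\, A(t)=A\bigr) = (1-e^{-Cap(A)\epsilon})\,\mu_A(B) = Cap(A)\,\mu_A(B)\,\epsilon + O(\epsilon^2),
\]
and the main term equals $\MM_{\BB(A)}(B)\,\epsilon$ by the definition of $\mu_A$. To finish, I need to check that having two or more jumps in $[t,t+\epsilon]$ contributes only $o(\epsilon)$: conditional on one jump having occurred at time $\tau \in [t,t+\epsilon]$ and landed at some $X$, a second jump before $t+\epsilon$ has conditional probability at most $Cap(A \cup \{X\})\,\epsilon$; since $A \cup \{X\}$ is still a finite union of balls, $Cap(A \cup \{X\})$ is finite, and bounded uniformly over $X \in \partial \BB(A)$ by compactness of that boundary in $\HH$. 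Integrating, the two-or-more-jump contribution is $O(\epsilon^2)$, as needed.

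For equation (\ref{newparticleout}), the point is that any single jump starting from state $A(t)$ lands on $\partial \BB(A(t))$ by the construction of $\mu_{A(t)}$. Thus if $B \cap \partial \BB(A(t)) = \emptyset$, the event $\{B \cap (A(t+\epsilon) \setminus A(t)) \neq \emptyset\}$ forces at least two jumps in $[t,t+\epsilon]$, which the bound from the previous step controls at order $O(\epsilon^2) = o(\epsilon)$. The only nontrivial point in the whole argument, such as it is, is the bookkeeping needed to dispatch the multiple-jumps contribution; everything else follows mechanically from the construction together with the memoryless property of exponential inter-arrival times.
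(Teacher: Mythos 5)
Your proposal is correct and follows essentially the same route as the paper: prove the Markov property from the construction, compute the one-jump contribution exactly via independence of the jump time and landing location, and dispose of the multi-jump contribution by an $O(\epsilon^2)$ bound. The only cosmetic difference is how the uniform bound on the post-jump capacity is obtained—you invoke compactness of $\partial\BB(A)$ (which implicitly also needs continuity of $X\mapsto Cap(A\cup\{X\})$), whereas the paper uses the cleaner subadditivity estimate $Cap(A_i)\le P_0\, i$ with $P_0$ the capacity of a single ball, which follows directly from monotonicity of the harmonic measure.
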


\begin{pf}
The Markov property follows immediately from the definition of the
process. In order to prove formula (\ref{newparticle}), we make note
that for all $i \in\mathbb{N}$,
\begin{eqnarray*}
\operatorname{Cap}(A_i) &\leq&\sum_{p \in A_i}
\MM_{\BB(A_i)} \bigl(\partial B_H(p, 2)\bigr)\\
& \leq&
\sum_{p \in A_i} \MM_{B_H(p, 2)} \bigl(\partial
B_H(p, 2)\bigr) = P_0 i
\end{eqnarray*}
for some constant $P_0 > 0$. Therefore, we can estimate
\begin{eqnarray*}
&&\PP\bigl(i(t + \varepsilon) \geq i(t) + 2 \vert A(t) \bigr) \\
&&\qquad\leq
\PP\bigl(t_{i(t)+1} \leq t + \varepsilon | A(t) \bigr) \PP
\bigl(t_{i(t)+2} < t_{i(t) +
1} + \varepsilon | A(t)\bigr) \\
&&\qquad\leq
\PP\bigl(E\bigl(1/\bigl(P_0 i(t)\bigr)\bigr) < \varepsilon\bigr) \PP
\bigl(E\bigl(1/\bigl(P_0 \bigl(i(t) + 1\bigr)\bigr)\bigr) < \varepsilon
\bigr) = O\bigl(\varepsilon^2\bigr),
\end{eqnarray*}
where $E(v)$ denotes an exponential variable with expectation $v$. We
deduce that the probability that more than one particle is added to the
cluster in an interval of the form $[t, t+\varepsilon]$ is of the order
$\varepsilon^2$. Since by definition, the next particle added must be at
$\partial\BB(A(t))$, equation (\ref{newparticleout}) follows.
Next, we have
\begin{eqnarray*}
&&\lim_{\varepsilon\to0^+} \frac{1}{\varepsilon} \PP \bigl(  A(t +
\varepsilon) \cap B \neq\varnothing \vert A(t) \bigr)\\
&&\qquad =
\PP( A_{i(t) + 1} \cap B \neq\varnothing) \lim_{\varepsilon\to0^+}
\frac
{1}{\varepsilon} \PP\bigl(t_{i(t) + 1} \leq t + \varepsilon| A(t) \bigr)\\
&&\qquad =
\mu_{A(t)}(B) \lim_{\varepsilon\to0^+} \frac{1}{\varepsilon} \bigl(1 -
\exp\bigl(- \varepsilon \operatorname{Cap}\bigl(A(t)\bigr) \bigr)\bigr) = \mu_{A(t)}(B) \operatorname{Cap}
\bigl(A(t)\bigr),
\end{eqnarray*}
which proves (\ref{newparticle}). The proof is complete.
\end{pf}

\section{Geometric lemmas}\label{sec3}
The goal of this section is to prove two geometric lemmas which will
serve as central ingredients in the proof. Throughout this section, we
assume that the embedding of $\HH$ in $\PH$ has been fixed, and
consider the aggregate $A(t)$ as a subset of $\PH$. We begin with some
definitions which will be frequently used later on.

For every time $t \geq0$, we define
\[
X(t) = \sup\bigl\{|x|; \exists y \mbox{ such that } (x,y) \in A(t) \bigr\}
\]
and
\[
Y(t) = \sup\bigl\{y; \exists x \mbox{ such that } (x,y) \in A(t) \bigr\}.
\]
We define also,
\[
Y_L^+(t) = \sup\bigl\{y; \exists x \geq L \mbox{ such that } (x,y)
\in A(t) \bigr\}
\]
and
\[
Y_L^-(t) = \sup\bigl\{y; \exists x \leq L \mbox{ such that } (x,y)
\in A(t) \bigr\}.
\]
For a particle $b \in A(t)$, we say that $b$ is in the \emph{front} of
$A(t)$ and denote $b \in\FR(A(t))$ if there exists a point
$p=(x,y) \in\PH$ having $d_H(b,p) \leq1$ and $|x| \geq X(t)$.
Finally, we define
\[
\tilde Y(t) = \sup\bigl\{y; (x,y) \in\FR\bigl(A(t)\bigr) \bigr\}.
\]
These definitions are illustrated in Figure~\ref{figdef}.

We begin with the following\vspace*{1pt} upper bound for the rate of growth of
$X(t)$, which turns out to be controlled by $\tilde Y(t)$ in expectation.

\begin{lemma} \label{lemX}
There exists a universal constant $C >0$ such that for all $t \geq0$,
one has almost surely
%
\begin{equation}
\label{lemxf} \lim_{\varepsilon\to0^+} \frac{1}{\varepsilon} \bigl(\EE
\bigl[X(t+\varepsilon ) | \FF_t\bigr] - X(t) \bigr) \leq C \tilde Y(t).
\end{equation}
\end{lemma}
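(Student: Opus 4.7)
My plan starts from the infinitesimal description of the cluster growth in Fact \ref{newparticlefact}. Conditioning on $A(t)$ and using the $Cap$ estimate from the proof of that Fact to show that the contribution of intervals with two or more particle additions is $O(\eps^2)$, one obtains
$$
\lim_{\eps \to 0^+} \frac{1}{\eps}\bigl(\EE[X(t+\eps)\mid \FF_t] - X(t)\bigr) = \int_{\partial \BB(A(t))} \bigl(|x(p)| - X(t)\bigr)_+ \, d\MM_{\BB(A(t))}(p).
$$
By the reflection symmetry $x \leftrightarrow -x$ in the half-plane model, it is enough to bound the one-sided integral over $\partial \BB(A(t)) \cap \{x > X(t)\}$ and multiply by $2$.

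The pointwise geometric input comes from formula \eqref{poincare}: $\partial B_H(b, 2)$ is the Euclidean circle of radius $y_b \sinh 2$ centred at $(x_b, y_b \cosh 2)$. Hence any $p = (x,y) \in \partial \BB(A(t))$ with $x > X(t)$ lies on $\partial B_H(b, 2)$ for some tangent particle $b = (x_b, y_b) \in A(t)$, and since $x_b \le X(t)$,
$$
0 < x - X(t) \le x - x_b \le y_b \sinh 2.
$$
Thus the integrand is controlled pointwise by $\sinh 2 \cdot y_{b}$, where $b$ is the tangent particle. I would then split the right-sided integral according to whether $b \in \FR(A(t))$.

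On the front part ($b \in \FR(A(t))$), the definition gives $y_b \le \tilde Y(t)$ directly, so the contribution is bounded by $\sinh 2 \cdot \tilde Y(t) \cdot \MM_{\BB(A(t))}(\{x > X(t)\})$, and it remains to show that this total harmonic mass is controlled by a universal constant. I would extract this bound using Fact \ref{conformal} together with the monotonicity of $m_{A,B}$ recorded after the definition of $\MM$: passing to the Euclidean picture, the right-sided mass is dominated by the harmonic mass of the rightmost front particle's $2$-ball in isolation, which equals the universal constant $P_0$ from Fact \ref{newparticlefact}.

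The main obstacle, and the technical heart of the lemma, is the off-front case $b \notin \FR(A(t))$, in which $y_b$ may be arbitrarily larger than $\tilde Y(t)$. Here the two conditions $x_b + y_b \sinh 1 < X(t)$ (off-front, from the shape of $B_H(b,1)$) and $x_b + y_b \sinh 2 > X(t)$ (contributes to the right side) force $y_b$ and $X(t) - x_b$ to be comparable. The plan is to show that the $\MM$-mass of the contributing arc of such a $b$ decays fast enough to absorb the extra factor $y_b$. Intuitively, the front particles, sitting at height at most $\tilde Y(t) \ll y_b$, shield $b$ from Brownian motion arriving from $\HH(\infty)$; via Fact \ref{conformal} this turns into a Euclidean harmonic-measure estimate in the funnel-shaped region between the front and $b$, which I would handle with a Beurling-projection-type bound. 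Combining the two cases and using the reflection symmetry then gives the desired bound with a universal constant $C$.
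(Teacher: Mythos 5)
Your integral reformulation and the pointwise geometric bound $(|x(p)|-X(t))_+ \le y_b\sinh 2$ for the tangent particle $b$ match the paper's argument: the paper writes the same quantity as $\int_0^\infty \MM_{\BB(A(t))}(B_s)\,ds$ and, unlike your sketch, justifies passing the $\eps\to 0$ limit inside the expectation via the tail estimate of Lemma \ref{lemannoying}. The step that fails is your front-case bound. You replace the weighted sum $\sum_b y_b\,\MM_{\BB(A(t))}(\partial\BB(\{b\}))$ by the cruder quantity $\tilde Y(t)\cdot\MM_{\BB(A(t))}\bigl(\partial\BB(A(t))\cap\{|x|>X(t)\}\bigr)$ and assert that this total right-sided harmonic mass is dominated by the single-ball constant $P_0$. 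That is false. Take a vertical stack of $N+1$ tangent particles at $(0,e^{-2k})$, $k=0,\dots,N$: then $X(t)=0$, every particle is in the front, $\tilde Y(t)=1$, yet by left--right symmetry the right-sided mass equals $\tfrac12\,Cap(A(t))$, which grows like $N$ (the equilibrium measure of a geodesic segment is comparable to its length, cf.\ the sketch of Lemma \ref{lemstopping}). Monotonicity of $m_{A,B}$ controls the density pointwise and gives $\MM_{\BB(A(t))}(\partial\BB(\{b\}))\le P_0$ for each ball, but it says nothing about the number of contributing balls, so it cannot rescue the claim.

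What the paper actually does is keep the weights $y_b$ inside the sum and bound $\sum_{(x,y)\in\FR(A(t))} y$ by a geometric series: by Fact \ref{invariance} the region $F(K)$ has $K$-independent hyperbolic volume and hence contains at most a universal number $C_2$ of centers of disjoint unit balls, and since every front particle lies in some $F(\tilde Y(t)2^{-j})$ one gets $\sum_{\FR} y\le 2C_2\tilde Y(t)$. In the stack example this weighted sum is $\sum_k e^{-2k}=O(1)$ even though the unweighted mass diverges, which is exactly the cancellation your estimate throws away; this dyadic packing bound is the key idea missing from your proposal. Finally, the off-front case you flag as the technical heart is not needed once this is in place: the proof treats the front precisely so that every point of $\partial\BB(A(t))$ with $|x|>X(t)$ lies on $\partial\BB(\{b\})$ for a front particle $b$ (the operative radius throughout is the tangency radius $2$, which is also where the constant $C_1$ in (\ref{fkeq}) comes from), so no Beurling-type shielding estimate is required.
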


The geometric intuition behind this lemma is the following: first of
all, by the nature of the harmonic measure, if each particle of the
aggregate would be allowed to duplicate itself with a constant rate,
regardless of the other existing particles, this would result in a
faster expected growth of $X(t)$. Consequently, it is enough to prove
this lemma for the simpler model in which the harmonic measure is
replaced with the usual length measure on the boundary of the
aggregate. By definition of the \emph{front} of the aggregate, we may
only consider particles in $\FR(A(t))$ since only these can cause
$X(t)$ to increase by duplicating. Lemma \ref{ballslem} shows us that a
particle whose height is $y$ is expected to duplicate to a particle at
horizontal distance $C y$ for some fixed $C>0$, which implies that the
total expected horizontal growth of the aggregate at unit time is
bounded by the sum $\sum_{p \in\FR(A(t))} C y(p)$. The geometry of the
front of the aggregate only allows a constant number of particles at a
given height, which will allow us to bound this sum by that of a
geometric sequence, which only depends on the largest summand. In other
words, the expected growth will be bounded by the height of $\FR(A(t))$.

We will first need the following intermediate, technical result, whose
proof is postponed to the end of the section.

\begin{lemma} \label{lemannoying}
For all $t \geq0$ and given any aggregate $A(t)$, there exist
constants $C, \eps_0 > 0$ such that for all $\eps< \eps_0$
\[
\PP \bigl(X(t + \eps) - X(t) > \alpha\mid\mathcal{F}_t \bigr) \leq
C \eps\min \bigl(\alpha^{-2}, 1 \bigr) \qquad\forall\alpha> 0.
\]
\end{lemma}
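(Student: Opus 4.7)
The plan is to reduce the statement to a uniform bound on the harmonic measure of the ``far-field'' boundary set and then to promote this bound to the stated $\min(\alpha^{-2},1)$ shape using the fact that the far-field set is empty once $\alpha$ exceeds $7 Y(t)$.

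By Fact \ref{newparticlefact}, setting $B_\alpha := \{(x,y) \in \partial \BB(A(t)) : |x| > X(t)+\alpha\}$, for $\eps$ small the conditional probability of adding exactly one new particle in $[t,t+\eps]$ that lands in $B_\alpha$ is at most $\eps \MM_{\BB(A(t))}(B_\alpha)$, while the probability of adding two or more particles in the same interval is $O(\eps^2)$ with a constant depending on $A(t)$ (as in the proof of Fact \ref{newparticlefact}). Since the event $X(t+\eps) - X(t) > \alpha$ can only occur if at least one arrival lies in $B_\alpha$, the multi-particle correction can itself be bounded by an extra factor of $\MM_{\BB(A(t))}(B_\alpha)$, preserving the $\alpha$-dependence. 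The problem is thereby reduced to proving
$$
\MM_{\BB(A(t))}(B_\alpha) \leq C_1(A(t)) \min(\alpha^{-2}, 1), \qquad \forall \alpha > 0.
$$

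For this, I would combine two elementary ingredients. First, the monotonicity of $m_{\cdot,\infty}$ recorded just before the remark of Section~2.2, together with the hyperbolic rotational symmetry of a single metric ball, yields $m_{\BB(A(t)),\infty}(x) \leq M_0$ pointwise, where $M_0 := P_0/(2\pi \sinh 2)$ is the (uniform) density on the boundary of an isolated ball of radius $2$. Integrating against hyperbolic arclength over the union of all particle-ball boundaries gives
$$
\MM_{\BB(A(t))}(B_\alpha) \leq M_0 \cdot \ell_H(B_\alpha) \leq M_0 \cdot 2\pi \sinh 2 \cdot |A(t)| = P_0 \, |A(t)|.
$$
Second, Lemma \ref{ballslem} gives $B_H(p,2) \subset B_E(p, 7 y_p)$, so any particle contributing a boundary point to $B_\alpha$ must satisfy $y_p \geq \alpha/7$. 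Consequently $B_\alpha = \emptyset$ whenever $\alpha > 7 Y(t)$.

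Setting $C_1(A(t)) := P_0 \, |A(t)| \cdot \max(1, 49 \, Y(t)^2)$, the three regimes check out: for $\alpha \leq 1$ the uniform bound $P_0|A(t)|$ is at most $C_1 = C_1 \min(\alpha^{-2},1)$; for $1 \leq \alpha \leq 7Y(t)$ the same uniform bound is at most $C_1/\alpha^2$ by the choice of $C_1$; and for $\alpha > 7Y(t)$ the left-hand side is zero. Plugging this back into the probabilistic reduction and taking $\eps_0$ small enough to absorb the multi-particle error gives the claim with $C$ of the form $2C_1$. The main subtlety I anticipate is the multi-particle correction: a naive $O(\eps^2)$ bound is independent of $\alpha$ and would dominate $C_1 \eps \alpha^{-2}$ for very large $\alpha$, so one has to exploit the fact that at least one of the multiple arrivals must itself land in $B_\alpha$, which brings back the factor $\MM_{\BB(A(t))}(B_\alpha)$ and hence the desired $\min(\alpha^{-2},1)$ factor uniformly in $\alpha$.
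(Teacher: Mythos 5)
Your argument breaks at the multi-particle step, and the break is fatal. You define $B_\alpha \subset \partial\BB(A(t))$ relative to the boundary at time $t$, and then assert that the event $X(t+\eps)-X(t)>\alpha$ ``can only occur if at least one arrival lies in $B_\alpha$.'' That is false. A cascade of several particles added within $[t,t+\eps]$ can push $X$ past $X(t)+\alpha$ even though each individual particle attaches to a part of the \emph{evolving} boundary that was nowhere near $B_\alpha$ at time $t$. The cleanest way to see the failure: you correctly note that $B_\alpha=\emptyset$ once $\alpha>7Y(t)$, and your reduction then yields probability zero; but the true probability is strictly positive for every $\alpha$ and every $\eps>0$, since a long chain of particles can always run far to the right. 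So the bound you claim, with the multi-particle correction ``bounded by an extra factor of $\MM_{\BB(A(t))}(B_\alpha)$,'' cannot hold. Your single-particle estimate (the pointwise density bound via monotonicity and the emptiness of $B_\alpha$ for $\alpha>7Y(t)$) is fine, but it contributes no genuine $\alpha^{-2}$ decay — that decay in your write-up is manufactured entirely by inflating $C_1$ to absorb $49Y(t)^2$, which is harmless only because the set is empty past $7Y(t)$; the heavy lifting for large $\alpha$ has to come from the multi-particle regime, which is exactly where your argument has the hole.

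The paper's proof takes the opposite tack: instead of locating a single particle in a far-field set, it observes (via Lemma \ref{ballslem}) that each new particle can increase $Y$ by at most a bounded multiplicative factor, hence $X(t_{n_0+n})\leq X(t)+7Y(t)10^n$. Thus achieving an increment $>\alpha$ forces at least $n\approx\log_{10}(\alpha/7Y(t))$ particles to arrive by time $t+\eps$. The probability of $n$ arrivals in a window of length $\eps$ is controlled by stochastic domination (each inter-arrival time dominates a uniform on a small interval, since $\mathrm{Cap}(A_j)\leq C_0(j+1)$) plus a large-deviation bound for sums of uniforms, yielding $\PP(t_{n_0+n}-t<\eps\mid A(t))\leq C'\eps\,10^{-2n}$. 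Unwinding $n$ in terms of $\alpha$ gives the $\eps\min(\alpha^{-2},1)$ shape. The mechanism producing the $\alpha^{-2}$ is therefore the scarcity of many fast arrivals, not the smallness of any fixed far-field harmonic measure. To fix your approach you would have to track the moving boundary across the cascade, at which point you essentially reproduce the paper's argument.
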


\begin{pf*}{Proof of Lemma \ref{lemX}}
Fix a time $t > 0$ and an aggregate $A(t)$. For all $s > 0$, define the set
\[
B_s = \bigl\{(x,y) \in\partial\BB\bigl(A(t)\bigr); |x| - X(t) \geq
s \bigr\}.
\]
According to formulas (\ref{newparticle}) and (\ref{newparticleout}),
one has
\[
\lim_{\varepsilon\to0^+} \frac{1}{\varepsilon} \PP \bigl(X(t + \varepsilon ) - X(t)
\geq s \mid\FF_t \bigr) = \MM_{\BB(A(t))}(B_s).
\]
Using Lemma \ref{lemannoying}, we know that there exist constants
$\eps
_0,C>0$ such that for all $\eps< \eps_0$,
\[
\int_{s=0}^\infty\frac{1}{\varepsilon} \PP \bigl(X(t +
\varepsilon) - X(t) \geq s \mid\FF_t \bigr) \,ds < C.
\]
Consequently, we may use the dominated convergence theorem to get
%
\begin{eqnarray}
\label{ineqexp}&& \lim_{\varepsilon\to0^+} \frac{1}{\varepsilon} \EE \bigl(X(t +
\varepsilon ) - X(t) \mid\FF_t \bigr)\nonumber\\
&&\qquad =
\lim_{\varepsilon\to0^+} \int_{s=0}^\infty
\frac{1}{\varepsilon} \PP \bigl(X(t + \varepsilon) - X(t) \geq s \mid\FF_t
\bigr) \,ds \\
&&\qquad=
\int_{s=0}^\infty\MM_{\BB(A(t))}(B_s)
\,ds.\nonumber
\end{eqnarray}
Next, using Lemma \ref{ballslem}, we learn that for two points
$(x_1,y_1), (x_2,y_2) \in\PH$ one has
%
\begin{equation}
d_H\bigl((x_1,y_1), (x_2,y_2)
\bigr) \leq2 \quad\Rightarrow\quad |x_1 - x_2| \leq C_1
y_1.
\end{equation}
It follows that, using the definition of $\FR(A(t))$,
\[
B_s \subset\mathop{\bigcup_{(x,y) \in\FR(A(t)) }}_{|x| + C_1 y \geq X(t) +
s } \partial\BB
\bigl( \bigl\{(x,y)\bigr\} \bigr)
\]
for all $s > 0$. Next, observe that for all $(x,y) \in\PH$, one has by
definition
%
\begin{equation}
\MM_{\BB(A(t))} \bigl(\partial\BB\bigl(\{x,y\}\bigr)\bigr) \leq
\MM_{\BB(\{x,y\})} \bigl(\partial\BB\bigl(\{x,y\}\bigr)\bigr) =: P_0,
\end{equation}
where $P_0>0$ is a universal constant [in particular, it does not
depend on $(x,y)$]. A combination of the two above equations teaches us that
\begin{eqnarray*}
\MM_{\BB(A(t))} (B_s)& \leq&\mathop{\sum_{(x,y) \in\FR(A(t)) }}_{|x| +
C_1 y
\geq X(t) + s }
\MM_{\BB(\{x,y\})} \bigl(\partial\BB\bigl(\{x,y\}\bigr)\bigr)\\
& \leq&
\# \bigl\{ (x,y) \in\FR\bigl(A(t)\bigr); |x| + C_1 y \geq X(t) + s
\bigr\} P_0 \\
&\leq&
\# \bigl\{ (x,y) \in\FR\bigl(A(t)\bigr); C_1 y \geq s \bigr\}
P_0.
\end{eqnarray*}
A combination of the above inequality with (\ref{ineqexp}) yields
%
\begin{eqnarray}
\label{triangle} &&\lim_{\varepsilon\to0^+} \frac{1}{\varepsilon} \EE \bigl(X(t +
\varepsilon ) - X(t) \mid\FF_t \bigr)\nonumber\\
&&\qquad \leq
P_0 \int_{s=0}^{\infty} \# \bigl\{ (x,y)
\in\FR\bigl(A(t)\bigr); C_1 y \geq s \bigr\} \,ds\\
&&\qquad =
P_0 C_1 \sum_{(x,y) \in\FR(A(t))} y.\nonumber
\end{eqnarray}
We turn to estimate the above sum. Recall the definition of $\FR
(A(t))$ and observe that Lemma \ref{ballslem} also implies
%
\begin{equation}
\label{fkeq} (x,y) \in\FR\bigl(A(t)\bigr) \quad\Rightarrow\quad X(t) - C_1
y \leq|x| \leq X(t).
\end{equation}
Now, for any number $K>0$, define
\[
F(K) = \bigl\{(x,y) \in\PH; X(t) - C_1 y \leq|x| \leq X(t) \mbox{
and } K / 2 \leq y \leq K \bigr\}.
\]
Fact \ref{invariance} teaches us that the hyperbolic volume of $F(K)$
does not depend on $K$, as a dilation of the number $K$
corresponds to rescaling of each connected component of $F(K)$ about a
point on the $x$-axis. Since these sets are compact and separated from
the $X$ axis, they have a finite volume. It is thus clear that the
cardinality of any set of disjoint $d_H$-balls of radius $1$ whose
centers are in $F(K)$ is bounded by some universal constant $C_2$
(which does not depend of $K$). Consequently,
%
\begin{equation}
\label{eqx2} \sum_{(x,y) \in\FR(A(t)) \cap F(K) } y \leq C_2 K.
\end{equation}
Note that by equation (\ref{fkeq}), we have
\[
\FR\bigl(A(t)\bigr) \subset\bigcup_{j=0}^\infty
F\bigl(\tilde Y(t) 2^{-j} \bigr).
\]
Using this fact with (\ref{triangle}) and (\ref{eqx2}) finally gives
\begin{eqnarray*}
&&\lim_{\varepsilon\to0^+} \frac{1}{\varepsilon} \EE \bigl(X(t + \varepsilon ) - X(t)
\mid\FF_t \bigr) \\
&&\qquad\leq P_0 C_1 \sum
_{(x,y) \in\FR(A(t))} y\\
&&\qquad =
P_0 C_1 \sum_{j=0}^\infty
\mathop{\sum}_{(x,y) \in\FR(A(t)) \cap
F(\tilde Y(t) 2^{-j} ) } y \leq2 P_0 C_1
C_2 \tilde Y(t)
\end{eqnarray*}
and the proof of the lemma is complete.
\end{pf*}

The next bound can be regarded as a lower bound for the rate of growth
of $Y(t)$, whose proof relies heavily on the conformity of the map $H$.
This bound is a consequence of a rather straightforward geometric fact
about the harmonic measure: given a rectangle of the form $K = [-M,M]
\times[0,1]$, consider the harmonic measure $\mathcal{M}_{K, \RR
\times\{ 0 \}}$ evaluated on different points of its upper edge
$[-M,M] \times\{1\}$. The density of this measure at a point $(x,1)
\in\partial K$ is bounded from below by $c (M - |x| + 1)^{-1}$. Recall
that, by definition, the aggregate $A(t)$ is contained in the rectangle
$[-X(t), X(t)] \times[0, Y(t)]$. This means that the probability of
the aggregate's top-most particle [the one attaining $Y(t)$] to
duplicate itself upward, and thus increase $Y(t)$ by a constant
multiplicative factor is bounded from below by $c Y(t) / (X(t) + Y(t))$.

We will need a bound that deals with a slightly more general scenario,
in which one has the additional information that a constant fraction of
the aggregate's height is attained at a point close to the front of the
aggregate, say located at $X(t) - L$. In this case, the above estimate
on the harmonic measure gives a rate of growth of $c L^{-1}$. However,
since we do not assume here that the aggregate is entirely contained in
the corresponding rectangle the argument will have to be slightly more delicate.

\begin{lemma} \label{lemY}
There exists a constant $c>0$ such that for all $t \geq0$ one has
%
\begin{equation}
\label{ygrowseq} \lim_{\varepsilon\to0^+} \frac{1}{\varepsilon} \PP\bigl[Y(t +
\varepsilon) > (1+c) Y(t) | A(t) \bigr] > c \frac{Y(t)}{Y(t) + X(t)}.
\end{equation}
Furthermore, for any constant $\Delta\geq1$, there exists a constant
$c(\Delta)$ (which depends only on $\Delta$) such that the following holds:
Let $L \in\RR$ and suppose that $Y(t) \leq\Delta Y_L^+(t)$. Then
%
\begin{eqnarray}
\label{ylp}&& \lim_{\varepsilon\to0^+} \frac{1}{\varepsilon} \PP
\bigl[Y_{L - 10Y(t)}^+(t + \varepsilon)
 \geq(1 + c) Y_L^+(t) | A(t)
\bigr]\nonumber
\\[-8pt]
\\[-8pt]
\nonumber
&&\qquad > c(\Delta) \frac{Y(t)}{Y(t)
+ X(t) - L}.
\end{eqnarray}
Likewise, if $Y(t) \leq\Delta Y_L^-(t)$ then
%
\begin{eqnarray}
\label{ylm} &&\lim_{\varepsilon\to0^+} \frac{1}{\varepsilon} \PP
\bigl[Y_{L + 10Y(t)}^-(t + \varepsilon) \geq(1 + c) Y_L^-(t) | A(t)
\bigr]
\nonumber
\\[-8pt]
\\[-8pt]
\nonumber
&&\qquad > c(\Delta) \frac{Y(t)}{Y(t)
+ X(t) + L}.
\end{eqnarray}
\end{lemma}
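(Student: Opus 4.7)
All three parts of Lemma \ref{lemY}, by Fact \ref{newparticlefact}, reduce to lower bounds on $\MM_{\BB(A(t))}(B)$ for an ``upper cap'' $B \subset \partial \BB(A(t))$ above a designated particle. The plan is to use Fact \ref{invariance} to rescale the designated particle to height $1$, use Fact \ref{conformal} to translate $m_{\BB(A(t))}$ into an escape-rate density for planar Euclidean Brownian motion on $\PH$, and then lower-bound the escape rate at each point of $B$ by a ``climb-then-Poisson'' decomposition, with $\BB(A(t))$ enclosed in a bounding Euclidean rectangle via Lemma \ref{ballslem}.

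For \eqref{ygrowseq}: let $p_{\mathrm{top}} = (x_0, Y(t))$ be a topmost particle. Rescaling so that $p_{\mathrm{top}} = (x_0, 1)$ with $|x_0| \leq X := X(t)/Y(t)$, Lemma \ref{ballslem} gives $\BB(A(t)) \subset R := [-X-7, X+7] \times (0, 8]$. I would take $B := \partial \BB(A(t)) \cap \{q \in \partial B_H(p_{\mathrm{top}}, 2) : y_q \in [1+c, 2(1+c)]\}$ for a universal small $c > 0$, and show its hyperbolic length is bounded below. For $q \in B$, I decompose the escape of Brownian motion started at hyperbolic distance $\epsilon$ outside $q$ as (a) climb to $\{y = 10\}$ without hitting $\BB(A(t))$, with probability $\gtrsim \epsilon$ by a 1D projection argument modified by a bounded factor due to nearby particles; (b) from $\{y = 10\}$, exit $\PH$ through $\RR \times \{0\} \setminus [-X-7, X+7]$ without re-entering $R$, with probability $\gtrsim 1/(1+X)$ by a Poisson-kernel estimate in $\PH \setminus R$. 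Multiplying and integrating yields $\MM_{\BB(A(t))}(B) \gtrsim 1/(1+X)$, which after restoring scale is the desired bound.

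For \eqref{ylp} (and \eqref{ylm} by the reflection $x \mapsto -x$), take $p^* = (x^*, y^*) \in A(t)$ with $x^* \geq L$, $y^* = Y_L^+(t)$, and rescale to $y^* = 1$; the hypothesis becomes $Y(t) \leq \Delta$, uniformly bounding particle heights by $\Delta$ and producing the $\Delta$-dependence in $c(\Delta)$. The cap at the top of $p^*$ has Euclidean horizontal extent $\sinh 2 \approx 3.6$ in rescaled scale, translating to $\leq 4\, Y(t)$ in the original and accounting for the $-10 Y(t)$ slack in the statement. The Poisson-kernel step now estimates only the probability of exiting $\PH$ at $\RR \times \{0\}$ to the right of $X(t)$: from $(x^*, h)$ with $h$ of order $\Delta$, this probability is $\gtrsim 1/(1 + X(t) - x^*) \geq 1/(1 + X(t) - L)$, since escape to the left is not needed and the aggregate there only helps by blocking alternative routes. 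Combining gives the required $\gtrsim Y(t)/(Y(t) + X(t) - L)$.

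The main difficulty I anticipate is the combination of (a) the cap-visibility step---showing the cap has positive hyperbolic length as a subset of $\partial \BB(A(t))$, i.e.\ is not entirely occluded by other particles---and (b) the climb step, where monotonicity in the obstacle goes the wrong way (replacing $\BB(A(t))$ by a smaller set would give an upper bound on the escape probability, not a lower bound), so a direct argument is needed. Both are to be resolved by exploiting the minimal pairwise hyperbolic distance $2$ between particles of $A(t)$: only a universally bounded (resp.\ $\Delta$-bounded) number of other particles can have $B_H(\cdot, 2)$-balls meeting a fixed hyperbolic neighborhood of the designated particle, so each contributes only a controlled multiplicative reduction to the cap length and to the climb probability.
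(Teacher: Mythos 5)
Your overall skeleton matches the paper's: by Fact~\ref{newparticlefact}, reduce to a lower bound on $\MM_{\BB(A(t))}(\cdot)$ for a cap near the top of the designated particle, rescale via Fact~\ref{invariance}, and chain escape probabilities for planar Brownian motion. Your ``climb-then-Poisson'' decomposition is a two-stage version of the paper's three-stage chain (reach $F = [x_0,\infty)\times(2\Delta y_1,\infty)$, then reach a much higher region $G$ via a one-dimensional optional-stopping estimate on the $y$-coordinate combined with symmetry of the $x$-coordinate, then escape from $G$ by rescaling). Those ideas are close enough that I consider the reduction and the shape of the chain essentially the same.

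However, there is a genuine gap in your treatment of the ``cap-visibility'' step, and the fix you propose does not work. You claim that because only a bounded (resp.\ $\Delta$-bounded) number of other particles can have $B_H(\cdot,2)$-balls meeting a fixed hyperbolic neighbourhood of $p^*$, ``each contributes only a controlled multiplicative reduction to the cap length.'' But one particle can annihilate the cap completely. Take, after rescaling, $p^* = (0,1)$ with $0 \geq L$; then the top of $B_H(p^*,2)$ is at $(0, e^2)$, and a particle $p' = (-\delta, e^2)$ with $\delta$ small has $d_H(p^*,p') > 2$ (so it is admissible) while $x_{p'} < L$ is permitted, and $B_H(p',2)$ contains the entire top cap of $B_H(p^*,2)$, so the set you call $B$ has zero length. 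No counting argument recovers a lower bound here. The paper deals with exactly this via a dichotomy which your proposal omits: either some particle of $A(t)$ already lies in $[L-10Y(t),\infty)\times[(1+c)y_0,\infty)$, in which case the event in \eqref{ylp} holds with probability one already and there is nothing to prove, \emph{or} no such particle exists (equation \eqref{atcontained}), and then the distance estimate $d_H((x_0,y_1),U_c)\geq 2+c_2$ shows not just that the cap $S$ is unoccluded but that the full vertical chimney $S^+$ above it is disjoint from $\BB(A(t))$. This last fact is also the clean resolution of the monotonicity problem you correctly identify: the paper replaces $\BB(A(t))$ by the \emph{larger} obstacle $E=\PH\setminus S^+$ (so monotonicity goes the right way and gives a lower bound), whereas your ``climb modified by a bounded factor due to nearby particles'' has to argue around the wrong-way monotonicity directly and does not close.

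One further point to flag: the sentence ``escape to the left is not needed and the aggregate there only helps by blocking alternative routes'' is a misconception. Enlarging the obstacle anywhere can only \emph{decrease} the probability of the event ``reach $\PH(\infty)$ (or any specified part of it) before hitting the obstacle''; the aggregate on the left never helps. The correct way to make the left irrelevant is the one implicit in the paper's Step~2: restrict attention to the sub-event in which the Brownian path stays to the right of a fixed vertical line (probability $\geq 1/2$ by the symmetry of the $x$-coordinate, used jointly with the $y$-coordinate optional-stopping bound), so that only the bounding rectangle of the aggregate to the right of that line matters.
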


\begin{pf}
We will prove formula (\ref{ylp}). The proof of (\ref{ylm}) is
completely analogous, and the fact that (\ref{ygrowseq}) is true will
follow immediately from (\ref{ylp}) by taking $L=-X(t)$ and $\Delta=
1$.

Let $(x_0,y_0)$ be the point attaining the maximum $y_0=Y_L^+(t)$.
Denote $B= B_H((x_0,y_0), 2)$ and $y_1 = \max\{y; \exists x \mbox{
s.t. }   (x,y) \in B \}$.

Fix a constant $c>0$, which will be the universal constant in (\ref
{ylp}), whose value will be chosen later. If there exists a point
$(x,y) \in A(t)$ such that $x \geq L-10Y(t)$ and $y \geq(1+c) y_0$
then the event in (\ref{ylp}) holds almost surely, and we are done.
Therefore, we may assume from this point on that this is not the case,
hence, we can assume from now on that
%
\begin{equation}
\label{atcontained} A(t) \cap\bigl[L - 10 Y(t), \infty\bigr) \times\bigl[(1+c) y_0,
\infty\bigr) = \varnothing.
\end{equation}
Define the set
\[
U_c = \bigl( \bigl[L - 10 Y(t), \infty\bigr) \times\bigl[0, (1+c)
y_0\bigr]\bigr ) \setminus B.
\]
It is easy to verify that $d_H((x_0, y_1), U_0) \geq2 + c_1$ for a
universal constant $c_1 \geq0$ [recall that $d_H((x_0, y_0), (x_0,
y_1)) = 2$ and see Figure~\ref{fige}]. Therefore, by continuity by the
invariance of the metric to rescaling around the point $(x_0,0)$ (which
follows from Fact \ref{invariance}), we can choose the constant $c>0$
to be a small enough universal constant so that
%
\begin{equation}
\label{equt} d_H\bigl((x_0, y_1),
U_c\bigr) \geq2 + c_2
\end{equation}
for some universal constant $c_2 > 0$. Define
\[
S = \bigl\{(x,y) \in\partial B; d_H\bigl((x,y),
(x_0,y_1)\bigr) \leq c_2 / 2 \bigr\}
\]
\begin{figure}

\includegraphics{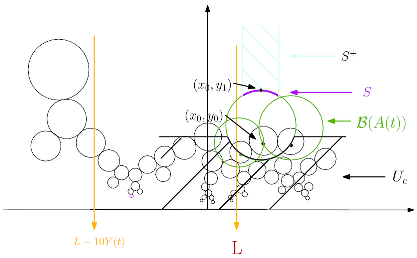}

\caption{Some of the definitions in the proof of Lemma \protect\ref{lemY}.} \label{fige}
\end{figure}
\hspace*{-3pt}(also see Figure~\ref{fige}). Equations (\ref{atcontained}) and (\ref
{equt}) imply that $S \subset\partial\BB(A(t))$. Note that this fact
does cease to be true if we make the constants $c,c_2$ smaller.
Therefore, by decreasing the value of these constants if necessary, we
can also assert that
%
\begin{equation}
\label{shigh} S \subset[L - 10 y_0, \infty) \times\bigl[(1+c)
y_0, \infty\bigr)
\end{equation}
(here we also used the fact that $x_0 \geq L$). Thanks to the last
equation and in view of equations (\ref{newparticle}) and (\ref
{newparticleout}), we have
\begin{eqnarray*}
&&\lim_{\varepsilon\to0^+} \frac{1}{\varepsilon} \PP\bigl[Y_{L - 10y_0}^+(t +
\varepsilon) \geq(1 + c) y_0 | A(t) \bigr] \\
&&\qquad=
\MM_{\BB(A(t))} \bigl( \bigl\{ (x,y) \in\partial\BB\bigl(A(t)\bigr); y \geq(1
+ c) y_0 \mbox{ and } x \geq L - 10y_0 \bigr\} \bigr)
\\
&&\qquad
\geq
\MM_{\BB(A(t))} (S).
\end{eqnarray*}
It is therefore enough to prove that
%
\begin{equation}
\label{mbneedtoshow} \MM_{\BB(A(t))} (S) \geq c(\Delta) \frac{Y(t)}{Y(t) + X(t) - L}.
\end{equation}
Our next goal thus to give a lower bound for $\MM_{\BB(A(t))}(S)$. We
do this in three steps.

\textit{Step} 1:
Define the set
\[
F = [x_0, \infty) \times( 2 \Delta y_1, \infty).
\]
In this step, we aim at showing that
%
\begin{equation}
\label{coolfact} \MM_{\BB( A(t)), F} (S) \geq c(\Delta)
\end{equation}
for some $c(\Delta) > 0$ which is a constant only depending on $\Delta
$. Define
\[
S^+:= \bigcup_{(x,y) \in S } \{x\} \times(y, \infty)
\]
and
\[
E = \PH\setminus S^+.
\]
Assumptions (\ref{atcontained}) and (\ref{equt}) along with Lemma
\ref
{ballslem} ensure that
\[
S^+ \cap\BB\bigl(A(t)\bigr) = \varnothing
\]
(see Figure~\ref{fige}) which implies that
%
\begin{equation}
\label{eqES} \MM_{\BB( A(t)), F} (S) \geq\MM_{E, F} (S).
\end{equation}
In order to give a bound for the right-hand side, we consider the transformation
\[
T\dvtx (x,y) \to\bigl((x - x_0) / y_0, y /
y_0 \bigr).
\]
By Fact \ref{invariance}, we know that $T$ is an isometry. Now, it is
not hard to verify that the sets $T(E)$ and $T(F)$ do not actually
depend on the aggregate $A(t)$, they only depend on the constant
$\Delta
$. It follows that there exists some constant $c(\Delta)$ such that
\[
\MM_{E, F} (S) = \MM_{T(E), T(F)} (S) = c(\Delta) > 0.
\]
It is also easy to verify (by drawing a picture) that $c(\Delta) > 0$
for all $\Delta\geq1$. By combining this with (\ref{eqES}), equation
(\ref{coolfact}) is proven.

\textit{Step} 2: Define
\[
G = (L, \infty) \times\bigl(2 \bigl(\Delta y_1 + X(t) - L\bigr),
\infty\bigr)
\]
and
\[
f(x,y) = \PP \biggl({ {\mbox{ Brownian motion started at } (x,y) } \atop {\mbox{
reaches }G \mbox{ before reaching }\BB\bigl( A(t)\bigr) }} \biggr).
\]
The aim of this step is to estimate $\inf_{(x,y) \in F} f(x,y)$. Along
with the previous step, this will give us a bound for $\MM_{\BB
(A(t)),G}(S)$.

In order to do this, we use the fact that $y$ coordinate of the
Brownian motion is a martingale whose starting value is at least $2
\Delta y_1$, together with the optional stopping theorem, to deduce
that the $y$ coordinate of the Brownian motion hits the set $2 (\Delta
y_1 + X(t) - L)$ before hitting the set $[0, \Delta y_1]$ with
probability at least $p':= \frac{\Delta y_1}{2 (\Delta y_1 + X(t) -
L)}$. Now since, by definition, $x_0 \geq L$, it follows from the
symmetry of the $x$ coordinate of the Brownian motion and from the
independence between the two coordinates that
%
\begin{equation}
\label{eqstep2} \inf_{(x,y) \in F} f(x,y) \geq\frac{\Delta y_1}{4 (\Delta y_1 + X(t) -
L)} \geq
c' \frac{Y(t)}{Y(t) + X(t) - L},
\end{equation}
where $c' > 0$ is a universal constant.

\textit{Step} 3: In view of that last step, it is enough to estimate
the probability that a Brownian motion starting from any point in $G$
will hit the set $\HH(\infty)$ before hitting $\BB(A(t))$. To show
that, we define
\[
H = \bigl(-\infty, \Delta y_1 + X(t)\bigr) \times\bigl[0, \Delta
y_1 + X(t) - L\bigr].
\]
Note that $A(t) \subset H$, so it is enough to estimate the probability
of reaching $\PH(\infty)$ before hitting $H$. The key
in this step is to define
\[
T\dvtx (x,y) \to\bigl((x - L) / \bigl(\Delta y_1 + X(t) - L\bigr),
y / \bigl(\Delta y_1 + X(t) - L\bigr) \bigr).
\]
Again, by Fact \ref{invariance}, we know that $T$ is an isometry. Moreover,
\[
T(G) = [0, \infty) \times[2, \infty),\qquad  T(H) = (-\infty, 1] \times[0,1].
\]
Viewed this way, it is clear that thanks to the conformal invariance
there exists a universal constant $c_3 > 0$ such that the probability
of a Brownian motion starting from any point in $G$ to hit to $x$ axis
before hitting $H$ is greater than $c_3$. Plugging this fact together
with (\ref{coolfact}) and (\ref{eqstep2}) finally gives
%
\begin{equation}
\MM_{\BB(A(t))} (S) \geq c_3 c(\Delta) c'
\frac{Y(t)}{Y(t) + X(t) - L},
\end{equation}
which is exactly (\ref{mbneedtoshow}), and the proof is complete.
\end{pf}

\begin{remark}
It is not hard to verify that the above proof gives us a rather poor
dependence of the constant $c(\Delta)$ on $\Delta$, namely, $c(\Delta)
\sim\exp(- \Delta^2)$. However, it is possible to prove that, in fact,
one can have the dependence $c(\Delta) \sim\Delta^{-1}$. Since this
difference will only affect the magnitude of the universal constant we
get in our main theorem, we choose to only present the above proof,
which is simpler.
\end{remark}

Finally, we will need the following lemma which will allow us to use
the optional stopping theorem.

\begin{lemma} \label{lemstopping}
Fix an aggregate $A(t)$ at time $t$, and fix a number $x_0>0$. Define
the stopping time,
\[
T = \min\bigl\{s \geq t; X(s) > x_0 | A(t) \bigr\}.
\]
Then
\[
\EE[T] \leq\infty.
\]
\end{lemma}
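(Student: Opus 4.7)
The strategy is to show that $X(s)$ reaches $x_0$ in bounded expected time by establishing a uniformly positive drift of $X$ to the right. The proof combines a lower bound on the rate at which new particles appear with a harmonic-measure estimate forcing rightward growth once the cluster has become tall enough.

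First, I would observe that the capacity is monotone non-decreasing in the aggregate: for $A' \subset A$, the probabilistic interpretation of the harmonic measure via time reversibility shows that a Brownian motion from infinity hits $\BB(A)$ at least as early as $\BB(A')$, so $Cap(A) \geq Cap(A')$. Consequently $Cap(A(s)) \geq Cap(A(t)) > 0$ for all $s \geq t$, and the waiting time between successive particle additions is stochastically dominated by an exponential random variable of finite mean $Cap(A(t))^{-1}$; in particular the process keeps evolving without getting stuck.

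Next, I would use Lemma \ref{lemY} (inequality (\ref{ygrowseq})) to control $Y$ on the event that $X(s) \leq x_0$ for all $s \geq t$. On this event the rate of $(1+c)$-multiplicative jumps of $Y$ is at least $c Y(s)/(Y(s) + x_0)$, which exceeds $c/2$ as soon as $Y(s) \geq x_0$; a routine argument then yields $Y(s) \to \infty$ in bounded expected time. Once $Y(s)$ is large relative to $x_0$, the topmost particle $(x^{**},y^{**})$ has its right-tangent position at hyperbolic distance $2$ lying at $x$-coordinate $x^{**} + y^{**} \sinh 2$, which comfortably exceeds $x_0$. A harmonic-measure lower bound modelled on Steps 1--3 of the proof of Lemma \ref{lemY}---using M\"{o}bius invariance (Fact \ref{invariance}) to rescale, a martingale argument on the $y$-coordinate of Brownian motion, and conformal invariance (Fact \ref{conformal})---gives that the rate of adding a particle in this exposed right-tangent region is bounded below by a positive constant. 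Assembling these facts via the strong Markov property yields $\EE[T] < \infty$.

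The principal obstacle is the harmonic-measure lower bound for the right-tangent region of the topmost particle: one must check that no particles of $A(s)$ lying below or to the side destroy the escape probability of a Brownian motion released there. The isolation of the topmost particle on the upper and right sides, combined with the monotonicity $m_{A', B} \geq m_{A, B}$ for $A' \subset A$ applied to a dominating configuration (for instance, replacing $A(s)$ by the half-plane $\{x \leq x_0\}$), should make this estimate tractable.
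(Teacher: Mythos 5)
Your proposal takes a genuinely different route from the paper's, and the extra work it creates is precisely where the gap lies.

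The paper's proof begins with an isometry trick: the vertical line $\{x = x_0\}$ is a geodesic in the half-plane model, so there is a hyperbolic isometry sending the half-plane $\{x \geq x_0\}$ to the exterior $L = \{x^2 + y^2 > 1\}$ of a geodesic semicircle. After applying this isometry the aggregate sits inside the unit semi-disc, and the target set $L$ is reached as soon as $X(s) \geq 1$ \emph{or} $Y(s) \geq 1$. This reduces the entire lemma to showing that $Y(s)$ reaches a fixed constant in finite expected time, which follows immediately from equation (\ref{ygrowseq}) of Lemma \ref{lemY}: while $X(s), Y(s) < 1$ the rate of multiplicative jumps of $Y$ is bounded below by the constant $cY(0)/2$, giving a sub-exponential tail for the stopping time. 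The key point is that the isometry makes vertical growth alone sufficient, so the paper never has to argue that $X(s)$ grows.

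Your proposal omits the isometry and therefore has to prove two things: that $Y$ grows large while $X \leq x_0$, and that once $Y$ is large $X$ must exceed $x_0$. The first step is fine and mirrors the paper's use of (\ref{ygrowseq}). The second step is where the gap is. You want a constant lower bound on the rate of adding a particle in the \emph{right-tangent} region of the topmost particle, but Lemma \ref{lemY} (and the set $S$ constructed in its proof) concerns the \emph{top-tangent} region, not the right-tangent one. These are not interchangeable: a particle added at height $\geq (1+c)Y(s)$ can easily still satisfy $|x| \leq x_0$, so vertical growth does not by itself force horizontal growth. Your proposed fix via the monotonicity $m_{A',B} \geq m_{A,B}$ for $A' \subset A$, comparing against a dominating configuration such as the half-plane $\{x \leq x_0\}$, is also problematic as stated: the comparison set must share the boundary point at which the density is evaluated, and the right-tangent point of the topmost ball (which lies at $x$-coordinate $x^{**} + Y(s)\sinh 2$ and height $Y(s)\cosh 2$) does not lie on $\partial\{x \leq x_0\}$. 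One would have to re-run the three-step argument of Lemma \ref{lemY} with a differently placed arc $S$ and a different martingale for the $x$-coordinate rather than the $y$-coordinate, and check that no other particle of $A(s)$ shadows that arc; this is plausible but is genuinely new work that your sketch does not carry out. The isometry in the paper's proof is precisely what makes all of this unnecessary, and your proof is incomplete without either that trick or a full substitute for this harmonic-measure estimate.

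A smaller point: the opening observation that $Cap(A) \geq Cap(A')$ for $A' \subset A$, while true, is not actually needed for non-extinction of the process, since $Cap(A(s)) > 0$ holds whenever $\BB(A(s))$ is a non-empty bounded set with smooth boundary; and in the hyperbolic setting the phrase \emph{a Brownian motion from infinity} requires care, since the Poisson boundary has more than one point.
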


The proof is not hard but rather technical, and we only provide a
sketch. One way to explain the reason behind this fact is that the
equilibrium measure on a geodesic line in the hyperbolic plane exists,
and is a constant multiple of the length measure. As a result, it
follows that the convex hull of the aggregate encapsulates any ball
within a time whose expectation is finite.

\begin{pf*}{Proof of Lemma \ref{lemstopping} (\normalfont{Sketch})}
Consider the domain
\[
L = \bigl\{(x,y) \in\PH; x^2 + y^2 > 1 \bigr\}.
\]
It is well known that for any two geodesic curves, there exists an
isometry of the hyperbolic plane sending the first to the second.
Consequently, there is a bijective isometry $T$ such that
\[
T \bigl( \bigl\{(x,y) \in\PH; x \geq x_0 \bigr\} \bigr) = L.
\]
Therefore, by considering the initial aggregate $T(A(t))$, without loss
of generality we may assume that
\[
T' = \min\bigl\{t \geq0; A(t) \cap L \neq\varnothing | A(0) \bigr
\}
\]
and prove that $\EE[T'] \leq\infty$ for an arbitrary initial aggregate
$A(0)$. Defining,
\[
T_1 = \min\bigl\{t; X(t) \geq1 \mbox{ or } Y(t) \geq1 \bigr\}.
\]
It is clear that $T' \leq T_1$, therefore, it is enough to show that
$\EE[T_1] < \infty$. Lemma \ref{lemY} teaches us that for any $t
\leq T_1$
one has
\[
\lim_{\varepsilon\to0^+} \frac{1}{\varepsilon} \PP\bigl( Y(t + \varepsilon) \geq
(1+c) Y(t) | \FF_t \bigr) \geq c Y(0)
\]
for a universal constant $c>0$. It is not hard to check that the last
equation implies that there exists a constant $c_1$ which only depends
on $Y(0)$ such that
\[
\PP\bigl(Y(t+1) \geq1 \mbox{ or } X(t+1) \geq1 | \FF_t\bigr) \geq
c_1
\]
for all $t > 0$. In other words,
\[
\PP(T_1 < t + 1 | \FF_t) \geq c_1\qquad
\forall t \geq0.
\]
The above equation implies that $T_1$ has a subexponential tail and,
therefore, has a finite expectation.
\end{pf*}

\begin{pf*}{Proof of Lemma \ref{lemannoying}}
Fix $t \geq0$ and fix an aggregate $A(t)$. Define $n_0 = i(t) = \#
A(t) + 1$. We begin with noting that Lemma \ref{ballslem} teaches us that
\[
A_{n_0 + n} \subset\RR\times\bigl[0, Y(t) 10^n\bigr]\qquad \forall n
\geq1
\]
and, therefore,
%
\begin{equation}
\label{eqxgrowth} X(t_{n_0 + n}) \leq X(t) + 7 Y(t) 10^n \qquad\forall
n \geq1
\end{equation}
almost surely. We claim that, in order to conclude the lemma, it will
be enough to show that there exist constants $C', \eps_0>0$ [which may
depend on $A(t)$] such that
%
\begin{equation}
\label{ntsannoying} \PP\bigl(t_{n_0 + n} - t < \eps| A(t)\bigr) <
C' \eps10^{-2n}\qquad \forall\eps < \eps_0.
\end{equation}
Indeed, for all $\alpha> 0$, write
\[
n = \max \biggl( \biggl\lfloor\frac{\log(\alpha/ 7 Y(t))}{\log10} \biggr\rfloor, 1 \biggr).
\]
Then thanks to (\ref{eqxgrowth}),
\[
\PP \bigl(X(t + \eps) - X(t) > \alpha\mid A(t) \bigr) \leq\PP\bigl(
t_{n_0 + n} \leq t + \varepsilon| A(t)\bigr)
\]
and plugging (\ref{ntsannoying}) to this would prove the lemma.

We therefore move on to the proof of (\ref{ntsannoying}). Recall that
for all $j$, the difference $t_j - t_{j-1}$ is an
exponentially-distributed random variable whose expectation is
$\operatorname{Cap}(A_j)^{-1}$. Moreover, we clearly have
by the definition of the harmonic measure
\begin{eqnarray*}
\operatorname{Cap}(A_j)& =& \MM_{\BB(A_j)} \bigl(\partial\BB(A_j)
\bigr) = \sum_{p \in A_j} \MM _{\BB(A_j)} \bigl(\partial
\BB\bigl(\{p\}\bigr)\bigr) \\
&\leq&
\sum_{p \in A_j} \MM_{\BB(\{p\})} \bigl(\partial\BB
\bigl(\{p\}\bigr)\bigr) = (j + 1) C_0\qquad \forall j \geq0,
\end{eqnarray*}
where $C_0>0$ is some universal constant. It follows that for all $j <
n$, the expectation
of $t_{j+1} - t_{j}$ is at least $\frac{1}{n C_0}$. An elementary fact
about exponentially-distributed variables is that
\[
0 < a < b \quad\Rightarrow\quad\PP\bigl(E[b] < t\bigr) < \PP\bigl(U\bigl([0,a]\bigr) < t
\bigr) \qquad\forall t > 0,
\]
where $U([0,a])$ represents a uniformly-distributed point in the
interval $[0,a]$. It follows that
\[
\PP(t_{n_0 + n} - t_{n_0 + 1} < \eps) \leq\PP \Biggl( \sum
_{i=1}^{n-1} X_i < \eps \Biggr)\qquad \forall
\eps>0, \forall n \geq1,
\]
where $X_i$ are independent variables whose distribution is uniform
over the interval $ [0, \frac{1}{C_0(n_0+n)}  ]$. An application
of a standard large-deviation principle teaches us that there exists
some $\eps_0 > 0$ (which may depend on $n_0$) such that
\[
\PP\bigl(t_{n_0 + n} - t_{n_0 + 1} < \eps | A(t_{n_0+1})\bigr)
\leq10^{-2n}
\]
for all $\eps< \eps_0$ and for all $n > 1$.
Moreover, since the density of the exponential distribution is bounded,
we have
\[
\PP\bigl(t_{n_0 + 1} - t < \eps| A(t)\bigr) \leq C_2 \eps\qquad
\forall\eps> 0
\]
for some constant $C_2$. Plugging the two above estimates finally
establishes equation (\ref{ntsannoying}) and the lemma is complete.
\end{pf*}

\section{The process of ratios}\label{sec4}
For all $t \geq0$, define $R(t) = X(t) / Y(t)$. The goal of this
section is to prove the following theorem.

\begin{theorem} \label{mainsec4}
There exists a universal constant $C>0$ such that the following holds:

Let $t \geq0$ be a time and fix any initial configuration $A(t)$. In
addition, fix a number $X_0$ such that $X_0 \geq X(t)$. Define, for
every nonnegative integer $i$,
\[
\tau_i = \min\bigl\{s; X(s) \geq2^i X_0
\bigr\}.
\]
Then one has for all $i$,
\[
\EE\bigl[R(\tau_{i+1}) | A(t)\bigr] \leq C + 0.9 \EE\bigl[R(
\tau_i) | A(t)\bigr].
\]
\end{theorem}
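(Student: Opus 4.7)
The strategy is to derive an infinitesimal drift inequality for $R(t) = X(t)/Y(t)$ that is strongly mean-reverting whenever $R$ exceeds a universal threshold, then convert this drift into the claimed contraction over $[\tau_i, \tau_{i+1}]$ via an optional-stopping argument. The drift inequality is obtained by combining lemmas \ref{lemX} and \ref{lemY} through a case analysis on the ratio $\tilde Y(t)/Y(t)$.

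Formally, the chain rule for the jump process yields
\[
\lim_{\epsilon \to 0^+} \frac{1}{\epsilon}\EE[R(t+\epsilon) - R(t) \mid \FF_t] \leq \frac{C\tilde Y(t)}{Y(t)} - R(t)\cdot \rho(t),
\]
where $\rho(t)$ is a lower bound on the drift of $\log Y$; this must be justified by the same dominated-convergence scheme used in the proof of lemma \ref{lemX}. Fix a small universal constant $\delta > 0$. In Case A ($\tilde Y \leq \delta Y$), the first term is at most $C\delta$, while the bound (\ref{ygrowseq}) of lemma \ref{lemY} gives $\rho \geq c/(R+1)$; for $R \geq 2$ and $\delta$ small enough, the drift of $R$ is then bounded away from zero in the negative direction. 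In Case B ($\tilde Y > \delta Y$), there exists a particle $(x^*, y^*) \in \FR(A(t))$ with $y^* \geq \delta Y(t)$, and by the definition of $\FR$ combined with lemma \ref{ballslem} one has $|x^*| \geq X(t) - 7 y^*$. Assuming $x^* > 0$ by symmetry and taking $L := x^* - 1$, the estimate (\ref{ylp}) with $\Delta := 1/\delta$ produces a rate of at least $c(1/\delta)/9$ for $Y^+_{L-10Y}$ to multiply by $(1+c)$, which combined with the comparison $Y \leq Y^+_L/\delta$ (inherited from the same particle) yields a positive constant lower bound on $\rho$ depending only on $\delta$. Combining the two cases, there exist universal constants $c_1, C_1$ and a threshold $R^*$ such that $\mathrm{drift}(R)(t) \leq C_1 - c_1 R(t)$ whenever $R(t) \geq R^*$.

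To convert the drift bound into the claimed contraction, I split according to whether $R(\tau_i)$ exceeds $R^*$. If $R(\tau_i) \leq R^*$, the deterministic bound $R(\tau_{i+1}) \leq 2R(\tau_i) + O(1)$, which follows from $Y$ being non-decreasing together with the fact (via lemma \ref{ballslem}) that the single jump of $X$ at $\tau_{i+1}$ has size at most $O(Y(\tau_{i+1}))$, immediately gives the theorem with $C$ a universal constant. If $R(\tau_i) > R^*$, introduce the exponential supermartingale $M(t) := e^{c_1(t-\tau_i)}(R(t) - C_1/c_1)$ and apply the optional-stopping theorem at $\tau_{i+1}$; this is justified by $\EE[\tau_{i+1} - \tau_i] < \infty$, a consequence of lemma \ref{lemstopping} iterated. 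Combined with the deterministic lower bound $\tau_{i+1} - \tau_i \gtrsim X(\tau_i)/Y$ (inherited from $dX/dt \leq CY$), this produces a contraction factor far stronger than $0.9$ once $R(\tau_i) \gg R^*$, which is more than enough to beat the required factor after absorbing the transitional region $R \sim R^*$ into the additive constant $C$.

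The main obstacle is Case B of the drift calculation: strictly speaking, (\ref{ylp}) only controls the multiplicative growth of $Y^+_{L-10Y}$, whereas the drift of $R$ depends on $Y$ itself. If $Y$ happens to be much larger than $Y^+_{L-10Y}$, a single application of (\ref{ylp}) does not suffice, and one must iterate — letting the height of the front grow rapidly until it catches up to the overall maximum, so that further multiplicative increases of $Y^+$ translate to multiplicative increases of $Y$. Making this iteration rigorous, while maintaining a uniform quantitative drift bound for $R$, is the technical heart of the proof. A secondary (but merely notational) difficulty is tracking constants carefully so that the final contraction factor can be made as small as $0.9$; once the qualitative drift estimate above is in hand, any contraction factor strictly less than $1$ is available simply by making $R^*$ larger.
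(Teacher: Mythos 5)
Your overall blueprint — split on whether $\tilde Y(t) / Y(t)$ is small, use lemma~\ref{lemX} to control the rate of growth of $X$ and lemma~\ref{lemY} to control the rate of growth of $Y$, then convert a drift estimate for $R = X/Y$ into a contraction via optional stopping — is the same skeleton the paper follows, and your Case A matches the paper's lemma~\ref{EtLem} verbatim (the event $E(t) = \{\tilde Y(t) < \delta Y(t)\}$). However, there are two genuine gaps, and you have identified only the first.

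First, the pointwise drift bound you claim in Case B is false, and not merely technically incomplete: you cannot obtain a lower bound on the drift $\rho$ of $\log Y$ when $\tilde Y > \delta Y$, because $Y$ is a pure jump process that increases only when a new particle exceeds the current global maximum height. If the front particle has height $\delta Y$ and (\ref{ylp}) produces a descendant at height $(1+c)\delta Y < Y$, then $Y$ does not move at all, and the instantaneous drift of $\log Y$ at that configuration is zero. Your claim that this ``yields a positive constant lower bound on $\rho$'' does not follow from (\ref{ylp}), as you yourself acknowledge one paragraph later. The paper does not attempt a pointwise drift bound in this regime; instead, lemma~\ref{noEtLem} establishes a probabilistic statement over a finite time window (the interval on which $X$ grows by $10\%$): by iterating (\ref{ylp}) at a hierarchy of scales $L_j = X(t_0) - 5 - 50 N_j$, counting successful ``jumps'' of the front via Hoeffding's inequality, one shows that $Y$ multiplies by $5$ before $X$ grows by a fixed additive constant with probability $\geq 0.99$. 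That renewal-style probability estimate, not a drift inequality, is what produces the contraction factor, and carrying it out is precisely the iteration you name as the ``technical heart'' but do not execute.

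Second, the conversion step as written is broken. The ``deterministic lower bound $\tau_{i+1}-\tau_i \gtrsim X(\tau_i)/Y$'' does not exist: lemma~\ref{lemX} controls $\EE[X(t+\eps) - X(t) \mid \FF_t]$ in expectation only, the inter-arrival times are exponential with means of order $1/\text{Cap}(A_i)$ which can be arbitrarily small as the cluster grows, and a single particle can increment $X$ by as much as $7 Y(t)$. Consequently $\tau_{i+1}-\tau_i$ has no almost-sure positive lower bound, and even if it did, optional stopping applied to $M(t) = e^{c_1(t-\tau_i)}(R(t) - C_1/c_1)$ would yield $\EE[e^{c_1(\tau_{i+1}-\tau_i)}(R(\tau_{i+1}) - C_1/c_1)] \leq R(\tau_i) - C_1/c_1$, from which one cannot extract $\EE[R(\tau_{i+1})] \leq 0.9\,\EE[R(\tau_i)] + C$ because the factor $R(\tau_{i+1}) - C_1/c_1$ may be negative (the paper flags exactly this obstruction in the introduction: $R$ behaves like a supermartingale reflecting at zero). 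The paper's fix is to state the contraction directly for the stopped process $i \mapsto R(\tau_i)$ — proposition~\ref{sumitup} shows $\EE[R(T) \mid A(t_0)] < \tfrac{1}{2} R(t_0)$ for $T = \min\{s : X(s) \geq 1.3\, X(t_0)\}$ whenever $R(t_0) > C$, combined with corollary~\ref{corEt} to handle the transition out of the event $E$, and the uniform bound on jump sizes from lemma~\ref{ballslem} to control what happens near the threshold — rather than an exponential reweighting in continuous time.
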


The next lemma, which is one of the two main ingredients in the proof
of the theorem, gives upper bounds on the expected growth of $R(t)$.
Its proof relies on a combination of Lemmas \ref{lemX} and \ref{lemY}.

\begin{lemma} \label{EtLem}
There exist universal constants $\delta, c_1, c_2>0$ such that one has
for all $t \geq0$,
%
\begin{equation}
\label{eqetlem1} \limsup_{\varepsilon\to0^+} \frac{1}{\varepsilon} \EE\bigl[R(t+
\varepsilon) - R(t) | A(t) \bigr] < + c_1.
\end{equation}
Moreover, defining the following event,
%
\begin{equation}
\label{defEt} E(t):= \bigl\{ \tilde Y(t) < \delta Y(t) \bigr\},
\end{equation}
whenever the event $E(t)$ holds one has
%
\begin{equation}
\limsup_{\varepsilon\to0^+} \frac{1}{\varepsilon} \EE\bigl[R(t+ \varepsilon) - R(t)
| A(t) \bigr] < - c_2.
\end{equation}
\end{lemma}

\begin{pf}
Denote
\[
F(\varepsilon) = \bigl\{Y(t + \varepsilon) \geq(1+c) Y(t) \bigr\},
\]
where $c$ is the constant from equation (\ref{ygrowseq}). According to
Lemma \ref{lemY}, we have
%
\begin{equation}
\label{lemy1eq} \PP\bigl(F(\varepsilon) | A(t)\bigr) \geq c \varepsilon
\frac{Y(t)}{X(t) + Y(t)} + o(\varepsilon).
\end{equation}
%
Next, we use Lemma \ref{lemX} to deduce that
%
\begin{equation}
\label{lemx1eq} \EE\bigl[X(t + \varepsilon) - X(t) | A(t) \bigr] \leq C \tilde Y(t)
\varepsilon+ o(\varepsilon).
\end{equation}
We write
\begin{eqnarray*}
&&\EE\bigl[X(t+ \varepsilon) / Y(t + \varepsilon) | A(t) \bigr]\\
&&\qquad =
\EE \biggl[  \dfrac{X(t+ \varepsilon)}{ Y(t + \varepsilon)}
\mathbf {1}_{F(\varepsilon)^C} \Big\vert
\mathcal{F}_t \biggr] + \EE \biggl[ \dfrac{X(t+ \varepsilon)}{ Y(t + \varepsilon)} \mathbf
{1}_{F(\varepsilon
)} \Big\vert \mathcal{F}_t \biggr]\\
&&\qquad \leq
\frac{1}{Y(t)} \EE\bigl[X(t+ \varepsilon) \mathbf{1}_{F(\varepsilon)^C} | \mathcal
{F}_t \bigr] + \frac{1}{(1+c) Y(t)} \EE\bigl[X(t+ \varepsilon) \mathbf
{1}_{F(\varepsilon)} | \mathcal{F}_t \bigr]\\
&&\qquad =
\frac{1}{Y(t)} \EE\bigl[X(t+ \varepsilon) | \mathcal{F}_t \bigr] -
\biggl( 1 - \frac
{1}{1+c} \biggr) \frac{1}{Y(t)} \EE\bigl[X(t+ \varepsilon)
\mathbf {1}_{F(\varepsilon)} | \mathcal{F}_t \bigr]\\
&&\qquad =
\frac{X(t)}{Y(t)} + \frac{1}{Y(t)} \EE\bigl[X(t+ \varepsilon) - X(t) | \mathcal
{F}_t \bigr] - \frac{c}{1+c} \frac{1}{Y(t)} \EE\bigl[X(t+
\varepsilon) \mathbf {1}_{F(\varepsilon)} | \mathcal{F}_t \bigr]\\
&&\qquad \leq
\frac{X(t)}{Y(t)} + \frac{1}{Y(t)} \EE\bigl[X(t+ \varepsilon) - X(t) | \mathcal
{F}_t \bigr] - \frac{c}{1+c} \frac{X(t)}{Y(t)} \PP\bigl(F(
\varepsilon) | \mathcal {F}_t \bigr).
\end{eqnarray*}
Plugging equations (\ref{lemy1eq}) and (\ref{lemx1eq}) into this
formula gives
\[
\EE\bigl[X(t+ \varepsilon) / Y(t + \varepsilon) | \mathcal{F}_t \bigr]
\leq\frac
{X(t)}{Y(t)} + C \frac{\tilde Y(t)}{Y(t)} \varepsilon- c_3 \varepsilon
\frac
{X(t)}{X(t) + Y(t)} + o(\varepsilon)
\]
for a universal constant $c_3>0$. Since $\tilde Y(t) \leq Y(t)$ by
definition, equation (\ref{eqetlem1}) follows. To prove the second
part of the lemma, the reader may easily verify that by the definition
of the event $E(t)$, whenever $E(t)$ holds with $\delta< 1$, one has
%
\begin{equation}
\label{Xbig} X(t) > c_4 Y(t)
\end{equation}
for a universal constant $c_4 > 0$. Moreover, by definition of the
event $E(t)$ one has
\[
\frac{\tilde Y(t)}{Y(t)} \leq\delta.
\]
Plugging in these two facts gives
\[
\EE\bigl[X(t+ \varepsilon) / Y(t + \varepsilon) | \mathcal{F}_t \bigr]
\leq\frac
{X(t)}{Y(t)} + \varepsilon \biggl(C \delta- c_3
\frac{1}{1 + c_4^{-1}} \biggr) + o(\varepsilon).
\]
Thus, by choosing $\delta$ to be a small enough universal constant, the
second part of the lemma is also established.
\end{pf}

As a corollary, we get
the following.

\begin{corollary} \label{corEt}
There is a universal $\delta> 0$ such that if we define the event
$E(t)$ as in (\ref{defEt}), then the following holds:
suppose $A(t)$ is such that $E(t)$ holds. Define
\[
T = \min\bigl\{s>t; E(s) \mbox{ does not hold or } X(s) > 1.1 X(t) \mbox{ or }
Y(s) > 100 Y(t) \bigr\}.
\]
Then one has
\[
\PP\bigl(X(T) \geq1.1 X(t) | \FF_t \bigr) \leq0.01.
\]
\end{corollary}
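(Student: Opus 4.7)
The plan is to combine the negative drift on $R(s)=X(s)/Y(s)$ supplied by Lemma \ref{EtLem} with the upper bound on the growth of $X$ from Lemma \ref{lemX}, and conclude with Markov's inequality. For $s \in [t,T)$, the event $E(s)$ holds by definition of $T$, so Lemma \ref{EtLem} gives that the infinitesimal drift of $R$ is at most $-c_2$. Working through the Markov structure from Fact \ref{newparticlefact}, the stopped process $M(s) := R(s \wedge T) + c_2 (s \wedge T - t)$ is a non-negative supermartingale. Since $T$ is no larger than the first hitting time of $\{X > 1.1 X(t)\}$, Lemma \ref{lemstopping} ensures $\EE[T-t \mid \FF_t] < \infty$, so optional stopping (equivalently, Fatou applied to the stopped non-negative process) gives $\EE[R(T) + c_2(T-t) \mid \FF_t ] \leq R(t)$, and dropping the non-negative term $R(T)$ leaves
\[
\EE[T-t \mid \FF_t] \leq \frac{R(t)}{c_2} = \frac{X(t)}{c_2 Y(t)}.
\]

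Next, applying Lemma \ref{lemX} integrated against the predictable compensator of $X$,
\[
\EE[X(T) - X(t) \mid \FF_t] \leq C \, \EE\!\left[\int_t^T \tilde Y(s)\, ds \,\Big|\, \FF_t\right].
\]
For every $s < T$, both $E(s)$ and $Y(s) \leq 100 Y(t)$ hold by the definition of $T$, hence $\tilde Y(s) < \delta Y(s) \leq 100 \delta Y(t)$. Combining this deterministic bound with the estimate on $\EE[T-t]$ from the previous paragraph,
\[
\EE[X(T) - X(t) \mid \FF_t] \leq 100 C \delta\, Y(t) \cdot \frac{X(t)}{c_2 Y(t)} = \frac{100 C \delta}{c_2} X(t).
\]

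Finally, choose $\delta > 0$ small enough that $100 C \delta / c_2 \leq 10^{-3}$; this is legal, since shrinking $\delta$ only strengthens the hypothesis in Lemma \ref{EtLem} and therefore preserves its negative-drift conclusion with the same $c_2$. Markov's inequality then yields
\[
\PP\bigl(X(T) \geq 1.1 X(t) \mid \FF_t\bigr) \leq \frac{\EE[X(T) - X(t) \mid \FF_t]}{0.1\, X(t)} \leq \frac{1000 C \delta}{c_2} \leq 0.01,
\]
as required. The one real technical point I anticipate is upgrading the $\limsup$ drift statements in Lemmas \ref{lemX} and \ref{EtLem} into genuine predictable-compensator inequalities suitable for Dynkin's formula; this is routine given the jump-Markov structure of Fact \ref{newparticlefact} together with the integrability supplied by Lemma \ref{lemstopping}, but it is the only non-mechanical step in the argument.
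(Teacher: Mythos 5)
Your proposal is correct and follows essentially the same route as the paper: use the negative drift from Lemma \ref{EtLem} and optional stopping to bound $\EE[T-t\mid\FF_t]$, then use the supermartingale from Lemma \ref{lemX} together with the bound $\tilde Y(s)\le 100\delta Y(t)$ for $s<T$ to bound $\EE[X(T)-X(t)\mid\FF_t]$ by $C'\delta X(t)$, and finish by shrinking $\delta$ and applying Markov's inequality. The technical caveat you flag at the end (upgrading the $\limsup$ drift bounds to genuine supermartingale inequalities) is indeed the one step the paper handles somewhat tersely, and your acknowledgment of it is appropriate.
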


\begin{pf}
Using the optional stopping theorem (which is justified thanks to Lemma
\ref{lemstopping}) with the result of the previous lemma, we have for a
small enough choice of $\delta$,
\[
\EE\bigl[ X(T) / Y(T) | \FF_t\bigr] \leq X(t) / Y(t) -
c_2 \EE[T - t].
\]
Since the left-hand side cannot be negative,
\[
\EE[T-t] \leq\frac{1}{c_2} \frac{X(t)}{Y(t)}.
\]
According to Lemma \ref{lemX}, the following process is a super-martingale:
%
\begin{equation}
s \to X(t+s) - X(t) - \int_t^{t+s} C \tilde Y(r)
\,dr.
\end{equation}
Therefore, by the optional stopping theorem, and since for every $t
\leq s < T$ we have by definition $\tilde Y(s) \leq\delta Y(s) \leq
100 \delta Y(t)$,
%
\begin{eqnarray}
\EE\bigl[X(T) - X(t) | \FF_t \bigr]& \leq& C \EE \biggl[\int
_t^T \tilde Y(s) \,ds \biggr] \leq 100 C \delta
Y(t) \EE[T-t]
\nonumber
\\[-8pt]
\\[-8pt]
\nonumber
& \leq&
C' \delta Y(t) \frac{X(t)}{Y(t)} \leq C' \delta
X(t).
\end{eqnarray}
Again, by choosing $\delta$ small enough (note that it can always be
made smaller without affecting the result of the previous lemma), we
can make sure that
\[
\EE\bigl[X(T) - X(t) | \FF_t \bigr] \leq0.001 X(t),
\]
and since $X(t)$ is increasing it follows by Markov's inequality that
\[
\PP\bigl(X(T) \geq1.1 X(t) | \FF_t \bigr) \leq0.01,
\]
which is the promised result.
\end{pf}

From this point on, we assume that the event $E(t)$ is defined as in
equation (\ref{defEt}), and the constant $\delta$ is a fixed positive
universal constant taken to be small enough such that the above
corollary holds true.

In view of the above corollary, the only times we have to worry about
are whenever $E(t)$ does not hold. The next lemma in some sense
complements the previous one, ensuring us that also if $E(t)$ does not
hold, we should expect $X(t) / Y(t)$ to decrease after a while (due to
completely different reasons), providing that it is not too small.

\begin{lemma} \label{noEtLem}
There exists a universal constant $\Gamma>0$ such that the following holds:
Assume that for some $t_0 \geq0$, $E(t_0)$ does not hold and $X(t_0) /\break
Y(t_0) > \Gamma$, then
\[
\EE\bigl[X(t_1) / Y(t_1) | A(t_0) \bigr]
< \tfrac{1}{4} X(t) / Y(t),
\]
where $t_1 = \min\{ s;   X(s) \geq1.1 X(t_0) \}$.
\end{lemma}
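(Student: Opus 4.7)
The key observation is that when $E(t_0)$ fails, the front of the aggregate is tall ($\tilde Y(t_0) \geq \delta Y_0$), which lets us invoke lemma \ref{lemY} (equation (\ref{ylp})) at a value of $L$ close to $X(t_0)$, where the growth rate of $Y$ is bounded below by a universal positive constant (rather than the suppressed $\sim Y/X$ rate of (\ref{ygrowseq})). Meanwhile lemma \ref{lemX} bounds $\tfrac{d}{dt}\EE[X] \leq C Y$. Combined with the elementary single-particle jump bound $X(t_1) \leq 1.1 X(t_0) + 7 Y(t_1)$ (a direct consequence of lemma \ref{ballslem}, since a new particle at hyperbolic distance $2$ from some $(x_p,y_p)\in A(t)$ has $x$-coordinate within $7 y_p$ of $x_p$), this says that for $R_0 := X(t_0)/Y(t_0) > \Gamma$ sufficiently large, $Y$ will multiply itself by a large constant $M$ before $X$ reaches $1.1 X(t_0)$ with high probability, which suffices.

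By fact \ref{invariance} I rescale so $Y_0 = 1$ and $X_0 = R_0$. Since $\tilde Y(t_0) \geq \delta$, equation (\ref{fkeq}) produces a front particle $(x^*, y^*) \in A(t_0)$ with $y^* \geq \delta$ and (WLOG) $x^* \geq R_0 - C_1$. Setting $L_0 := x^* - 1$, the hypothesis of (\ref{ylp}) is satisfied with $\Delta := 1/\delta$. I iterate: let $\sigma_0 = t_0$ and, for $k \geq 0$, let $\sigma_{k+1}$ be the first time $s > \sigma_k$ with $Y_{L_k}^+(s) \geq (1+c) Y_{L_{k-1}}^+(\sigma_k)$, where $L_k := L_{k-1} - 10 Y(\sigma_k)$ (matching the shifted-$L$ output of (\ref{ylp})). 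Stop the iteration early if $Y \geq M$, $X \geq R_0 + \beta$, or $s > t_0 + T_0$ (for universal constants $M, \beta, T_0$ chosen below, with $N := \lceil \log_{1+c}(M/\delta) \rceil$). While the iteration proceeds one has $Y(\sigma_k) \leq M$, $Y_{L_k}^+(\sigma_k) \geq (1+c)^k \delta$, and $X(\sigma_k) - L_k \leq \beta + 10NM + C_1 + 1$, so (\ref{ylp}) applies with $\Delta' := M/\delta$ and yields a uniform rate $\kappa := c(\Delta')/(1 + \beta + 10NM + C_1 + 1) > 0$ at each step.

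Since the gaps $\sigma_{k+1} - \sigma_k$ are stochastically dominated by i.i.d.\ Exp$(\kappa)$ variables, a standard tail bound gives $\PP(\sigma_N > t_0 + T_0) \leq 0.01$ for $T_0$ a sufficiently large constant multiple of $N/\kappa$. In parallel, by lemma \ref{lemX} combined with optional stopping (justified by lemma \ref{lemstopping}), $\EE[X(t_0 + T_0 \wedge \sigma_N) - X_0] \leq C T_0 M$, so Markov yields $\PP(X(t_0 + T_0) \geq R_0 + \beta) \leq C T_0 M/\beta \leq 0.01$ once $\beta$ is chosen large enough. Taking $\Gamma := 10\beta$, for $R_0 \geq \Gamma$ one has $R_0 + \beta \leq 1.1 R_0$, so $X$ stays below $1.1 R_0$ throughout the iteration. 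Hence with probability $\geq 0.98$, $t_1 \geq \sigma_N$ and therefore $Y(t_1) \geq M$.

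On this good event the jump bound gives $R(t_1) \leq 1.1 R_0/M + 7$, while on the complementary event (probability $\leq 0.02$) we crudely bound $R(t_1) \leq 1.1 R_0 + 7M$; taking $M = 20$ and $\Gamma$ large enough, direct computation yields $\EE[R(t_1)] \leq R_0/4$. The main obstacle is the iteration: verifying that $\kappa$ stays uniformly positive across all $N$ steps, which requires careful bookkeeping to confirm that both $Y(\sigma_k) \leq \Delta' Y_{L_k}^+(\sigma_k)$ and $X(\sigma_k) - L_k = O(1)$ (in the rescaled coordinates) remain valid throughout the iteration. This is the technical heart of the argument, and is where the ``tall-front'' failure of $E(t_0)$ in (\ref{defEt}) is propagated into a usable hypothesis for successive applications of lemma \ref{lemY}.
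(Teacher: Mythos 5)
Your high-level plan is essentially the same as the paper's: the failure of $E(t_0)$ produces a tall front particle at height $\geq \delta Y(t_0)$, which lets you apply~(\ref{ylp}) with $L$ near $X(t_0)$ (giving a jump rate that does not depend on the overall width $X$), and then iterate $O(1)$ many $Y$-jumps while controlling the additive growth of $X$ via lemma~\ref{lemX}. The difficulty is entirely in the bookkeeping, and that is where a genuine gap appears.

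The problem is a circular dependency among $\kappa$, $T_0$, and $\beta$ that cannot be resolved. You set $\kappa \sim c(\Delta')/\beta$ (the worst-case rate over the whole window, since $X(s)-L_k$ can be as large as $\beta + O(1)$), then take $T_0$ a large constant multiple of $N/\kappa \sim N\beta/c(\Delta')$, and then require $\beta \gtrsim C\,M\,T_0 \sim C M N \beta / c(\Delta')$ from the Markov step. Canceling $\beta$, this forces $c(\Delta') \gtrsim C M N$, which is impossible since $c(\Delta')<1$ is a (small) harmonic-measure constant while $C M N \geq 1$. The source of the trouble is that you fix a single wall-clock window $T_0$ and bound the jump rate by its worst case throughout that window. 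The paper avoids this by tracking \emph{doublings} of $Z(s) := X(s)-X(t_0)+5$ instead of elapsed time: on the interval where $Z$ goes from $Z_0$ to $2Z_0$, the jump rate is $\sim 1/Z_0$ while the duration of that interval is $\sim Z_0$ (since $dX/dt \lesssim CM = O(1)$), so the \emph{expected jumps per $Z$-doubling} is a universal constant independent of $Z_0$. Accumulating $N$ jumps then costs $O(N)$ doublings, i.e.\ $Z \leq 5\cdot 2^{O(N)}$, and hence a universal additive bound on $X(T)-X(t_0)$ — which is exactly equation~(\ref{mainstepeq}). Your fixed-window accounting loses this self-normalization and the constants do not close.

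There is also a smaller issue: you set $L_k := L_{k-1} - 10\,Y(\sigma_k)$, but the shift in~(\ref{ylp}) at a later time $s\in(\sigma_k,\sigma_{k+1})$ is $10\,Y(s) \geq 10\,Y(\sigma_k)$, so your $L_k$ does not shift far enough to capture the event guaranteed by~(\ref{ylp}). This is fixable by shifting by a conservative universal amount (the paper uses $50$, since $Y(s)\leq 5$ there; in your normalization one should use $10M$), but it is worth pointing out since the monotonicity of $Y_L^+$ in $L$ runs in the unhelpful direction here. Once both issues are addressed — replace the fixed time window by an adaptive $Z$-doubling scheme and use a conservative per-step shift — the argument matches the paper's Steps 2--4.
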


Before we move on to the proof, let us try to explain why this bound
should be correct. Whenever the event $E(t)$ does not hold, we know
that there is a particle $p$ located close to the front of the
aggregate which, up to a constant, attains the vertical height of the
entire aggregate, $Y(t)$. In this case, we can effectively ``restart''
the growth process by only considering the part close to the front of
the aggregate, while ignoring the rest of it: as a consequence of Lemma
\ref{lemY}, we know that parts of the aggregate located close to the
front have a vertical growth rate which is proportional only to the
distance from the front. This means that when considering only the
latter part of the aggregate, the growth rate will no longer be a
function of $X(t)$. Now, as a result of Lemma \ref{ballslem}, the
vertical growth of the particles is multiplicative in the sense that in
order for $Y(t)$ to multiply itself by a constant, it is enough for the
particle $p$ to duplicate itself upward a constant number of times.
From this point on, the proof relies on a compactness-type argument: we
know that the top particle has to duplicate a constant number of times,
while the rate of duplication is independent of $X(t)$. Therefore, it
is enough to establish that the universal rate of growth is such that
any number of duplications will occur eventually, with high
probability. The time that it takes, which affects the increment of
$X(t)$, can then be absorbed into the constant $\Gamma$; When this
constant is big enough, a prescribed additive growth of $X(t)$ results
in a small multiplicative growth which does not significantly affect $R(t)$.

The proof will be divided into a few steps. In the first step, we
demonstrate that it suffices to show that there exists a constant $C>0$
such that $Y(t)$ multiplies itself by some constant, say 5, before
$X(t)$ grows (additively) by $C$. The second and third steps deal with
the rate of duplications of the particle $p$ mentioned above. It is
shown that within any time interval in which $X(t) - X(t_0)$ multiplies
itself by two, there is at least a constant probability for the
particle $p$ to duplicate itself once. This is the ``compactness'' to
which we were referring above, as this rate does not depend on
$X(t_0)$. In the fourth and last step, we iteratively use this fact to
conclude that there is a probability bounded from below for any
constant number of multiplications when the time interval is large
enough.

\begin{pf*}{Proof of Lemma \ref{noEtLem}}
Since the claim is invariant to rescaling around the origin, we may
assume that $Y(t_0) = 1$. Define
\[
T = \min\bigl\{s; Y(s) > 5 \bigr\}.
\]
\textit{Step} 1:
We claim that it is enough to show that there exists a universal
constant $C>0$ such that
%
\begin{equation}
\label{mainstepeq} \PP\bigl(X(T) < X(t_0) + C\bigr) > 0.99.
\end{equation}
Let us explain why this fact suffices in order to complete the proof.
Since almost surely only one particle can be added at a time and
assuming that $\Gamma$ is a large enough constant, an application of
Lemma \ref{ballslem} gives
\[
X(t_1) \leq1.1 X(t_0) + 10 Y(t_0)
\leq1.11 X(t_0).
\]
Also, if $\Gamma$ is large enough then we can assume that $ X(t_0) + C
< 1.1 X(t_0)$
which implies that
\[
\PP(T < t_1) > 0.99.
\]
Using these two facts, we can thus estimate
\begin{eqnarray*}
&&\EE\bigl[X(t_1) / Y(t_1) | \FF_{t_0}\bigr] \\
&&\qquad=
\EE\bigl[X(t_1) / Y(t_1) \mathbf{1}_{\{T < t_1\}} |
\FF_{t_0}\bigr] + \EE\bigl[X(t_1) / Y(t_1)
\mathbf{1}_{\{t_1 \leq T\}} | \FF_{t_0}\bigr] \\
&&\qquad\leq
\EE\bigl[1.11 X(t_0) / \bigl(5 Y(t_0)\bigr)
\mathbf{1}_{\{T < t_1\}} | \FF_{t_0}\bigr] + \EE \bigl[1.11
X(t_0) / Y(t_0) \mathbf{1}_{\{t_1 \leq T\}} |
\FF_{t_0}\bigr] \\
&&\qquad\leq
\bigl(\tfrac{1.11}{5} + 1.11 \cdot0.01 \bigr) X(t_0) /
Y(t_0) < \tfrac
{1}{4} X(t_0) /
Y(t_0),
\end{eqnarray*}
which is the result.

\textit{Step} 2:
Define $Z(s) = X(s) - X(t_0) + 5$. According to the assumption that
$E(t_0)$ does not hold and by definition of $\FR(A(t))$, we know that
either $Y_{X(t_0) - 5}^+(t_0)$ or $Y_{-X(t_0) + 5}^-(t_0)$ are greater
than the universal constant $\delta>0$. Assume without loss of
generality that
%
\begin{equation}
\label{ylbig} Y_{X(s) - 5}^+(t_0) \geq\delta
\end{equation}
(the assumption is legitimate since the model is invariant under
reflection around the $y$ axis).
Define $\Delta= 100 \delta^{-1}$. The assumption \eqref{ylbig},
together with the definitions of $Y_L^+(s)$ and $T$, implies that for
any $L < X(t_0) - 5$ and for any $t_0 \leq s \leq T$ one has $Y(s) \leq
\Delta Y_L^+(s)$. Therefore, we can use the second part of Lemma \ref
{lemY} to deduce that there exists a universal constant $c_1>0$ such
that for all $t_0 \leq s < T$ and for all $L < X(t_0) - 5$ one has
%
\begin{equation}
\label{ygrows} \lim_{\varepsilon\to0^+} \frac{1}{\varepsilon} \PP
\bigl[Y_{L - 50}(s + \varepsilon) > (1 + c_1)
Y_{L}(s) | A(s) \bigr] > c_1 / \bigl(5 + X(s) - L\bigr).
\end{equation}
Here, we used the assumption that for $s < T$, one has $Y(s) \leq5$.

Define
\[
L_0 = 5 + 50 \log_{(1+c_1)} \Delta.
\]
At this point, the reader may regard $L_0$ as some large universal
constant, its significance will become clear later on. Let $L$ be a
number satisfying
%
\begin{equation}
\label{condL} X(t_0) - L_0 \leq L \leq
X(t_0) - 5.
\end{equation}
Also, fix a time $t_0 \leq t < T$ and define
\[
T_1 = \min\bigl\{s| Z(s) > 2 Z(t) \bigr\}.
\]
Let $N(s)$ be a random variable counting the number of ``jumps'' up to
time $s$, hence,
\[
N(s) = \# \Bigl\{ r \in[t,s]; Y_{L-50}^+(r) \geq(1 + c_1)
\lim_{\varepsilon\to0^+} Y_L^+(r - \varepsilon) \Bigr\}.
\]
Our next goal will be to show that there exists a universal constant
$c>0$ such that
%
\begin{equation}
\label{finaltau} \PP\bigl(T < T_1 \mbox{ or } N(T_1)
\geq1 | \FF_t \bigr) \geq c,
\end{equation}
which will be done in the next step.

\textit{Step} 3:
To prove the last formula, we begin by defining
\[
M(s) = N(s) - c_1 (s-t) / \bigl(2 Z(t) + L_0\bigr).
\]
By equation (\ref{ygrows}) and by the fact that $L \geq X(t_0) - L_0$,
we learn that $M(s)$ is a sub-martingale in the interval $[t, T_1
\wedge T]$. Thus, by the optional stopping theorem (which we can use
thanks to Lemma \ref{lemstopping}), one has
\[
\EE\bigl[ N(T_1 \wedge\tau) | \FF_t \bigr] \geq\EE
\bigl[(T_1 \wedge\tau- t ) | \FF _t \bigr]
c_1 / \bigl(2Z(t) + L_0\bigr),
\]
where $\tau= \min\{t| N(t) \geq1 \} \wedge T$. Consequently, for all
$\alpha> 0$, we may calculate
\begin{eqnarray*}
&&\PP(\tau< T_1 | \FF_t)\\
&&\qquad \geq\PP\bigl(
N(T_1) \geq1 | \FF_t \bigr) \geq\EE
\bigl[N(T_1 \wedge\tau) | \FF_t\bigr] \\
&&\qquad\geq
\EE\bigl[(T_1 \wedge\tau- t ) | \FF_t\bigr]
c_1 / \bigl(2Z(t) + L_0\bigr) \\
&&\qquad\geq
\EE\bigl[(T_1 - t) \mathbf{1}_{ \{\tau> T_1\} } | \FF_t
\bigr] c_1 / \bigl(2 Z(t) + L_0\bigr) \\
&&\qquad\geq
\bigl(\PP\bigl(T_1 - t > 2 \alpha Z(t) | \FF_t \bigr)
- \PP(T_1 > \tau| \FF_t)\bigr) 2 \alpha Z(t)
c_1 / \bigl(2 Z(t) + L_0\bigr)
\end{eqnarray*}
[using the assumption $Z(t) \geq5$]
\[
\geq\bigl(\PP\bigl(T_1 - t > \alpha2Z(t) | \FF_t\bigr) -
\PP(T_1 > \tau| \FF_t)\bigr) \alpha c_2,
\]
for some universal constant $c_2 > 0$.
Thus,
%
\begin{equation}
\label{eqzt} \PP(\tau< T_1| \FF_t) \geq\alpha
c_2 \PP\bigl(T_1 - t > 2 \alpha Z(t) | \FF
_t\bigr) / (1 + c_2 \alpha).
\end{equation}
We now use Lemma \ref{lemX}, combined with the fact that $Y(s) < 5$ for
all $t \leq s \leq T$, according to which
\[
\EE\bigl(Z(s \wedge T) - Z(t) | \FF_t \bigr) < C_1
(s-t)
\]
for a universal constant $C_1>0$. Taking $s=t + 2 \alpha Z(t)$ and
using Markov's inequality, we get
\[
\PP \bigl(Z \bigl(\bigl(t + 2 \alpha Z(t)\bigr) \wedge T \bigr) > 2 Z(t) \mid
\FF_t \bigr) < 2 C_1 \alpha.
\]
Now, by the definition of $T_1$,
\[
\bigl\{Z \bigl(\bigl(t + 2 \alpha Z(t)\bigr) \wedge T \bigr) < 2 Z(t) \bigr\}
\subseteq \bigl\{Z\bigl(t + 2 \alpha Z(t)\bigr) < 2 Z(t) \bigr\} \cup \{ T <
T_1 \}
\]
so a union bound gives
\[
\PP\bigl(Z\bigl(t + 2 \alpha Z(t)\bigr) < 2 Z(t) | \FF_t \bigr) > 1
- 2 C_1 \alpha- \PP(T < T_1 | \FF_t).
\]
But, using the definition of $T_1$ once more, we know that
\[
Z\bigl(t + 2 \alpha Z(t)\bigr) < 2 Z(t) \quad\Rightarrow\quad T_1 \geq t + 2
\alpha Z(t)o
\]
and the last equation becomes
\[
\PP\bigl(T_1 - t > 2 \alpha Z(t) | \FF_t \bigr) \geq1
- 2 C_1 \alpha- \PP(T < T_1 | \FF_t).
\]
Choosing $\alpha$ to be a small enough universal constant and plugging
the above into~(\ref{eqzt}) gives
\[
\PP(\tau< T_1| \FF_t) \geq\alpha c_3
\bigl(1 - 2 C_1 \alpha- \PP(T < T_1| \FF_t)
\bigr) \geq c_4\bigl(1 - \PP(\tau< T_1 |
\FF_t)\bigr),
\]
where $c_3, c_4$ are universal constants. In other words, we have that
%
\begin{equation}
\PP(\tau< T_1| \FF_t) \geq c
\end{equation}
and equation (\ref{finaltau}) is proven.

\textit{Step} 4:
At this point, the strategy we will use in order to prove (\ref
{mainstepeq}) is to repeat this argument again and again, for a
sequence of times $Q_i$, until we accumulate enough ``jumps'' so that $T$
is surely reached. Define
\[
Q_1 = \min\bigl\{s \geq t_0; Z(s) > 2 Z(0) \bigr\}
\]
and inductively,
\[
Q_{i+1} = \min\bigl\{s \geq t_0; Z(s) > 2
Z(Q_i) \bigr\}.
\]
Also define $I$ to be the largest integer $i$ such that $Q_i < T$. By
the definition of $T$ and by Lemma \ref{ballslem}, we know that the
(Euclidean) radius of any added ball is smaller than a constant, so we
can easily deduce the ``continuity'' in the following sense:
\[
Z(Q_{i+1}) < R Z(Q_i)\qquad \forall1 \leq i < I,
\]
where $R$ is a universal constant. It follows that
%
\begin{equation}
\label{zqsmall} Z(Q_i) < 5 R^i\qquad \forall1 \leq i < I.
\end{equation}
For all $i \in\mathbb{N}$ define $N_i$ to be the number of ``jumps'' so
far. In other words, define $N_0 = 0$
and (recursively)
\[
N_i = \# \Bigl\{ j < i; \exists r \in(Q_j,
Q_{j+1}] \mbox{ such that } Y_{L_j - 50}^+(r) \geq(1 +
c_1) \lim_{\varepsilon\to0^+} Y_{L_j}^+(r - \varepsilon)
\Bigr\},
\]
where
\[
L_j:= X(t_0) - 5 - 50 N_j.
\]
Define also $Y_i = Y_{L_i}^+(Q_i)$, $Z_i = Z(Q_i)$ and $\FF_i$ to be
the $\sigma$-algebra generated by $A(Q_i)$.
Observe that, by (\ref{ylbig}), the number of jumps needed in order to
reach $T$ is smaller than $\log_{(1+c_1)} \Delta$. So, by definition,
\[
N_I \leq\log_{(1+c_1)} \Delta,
\]
which implies, by the definition of $L_0$, that
\[
X(t_0) - L_0 \leq L_i \leq
X(t_0) - 5\qquad\forall i \leq I.
\]
The above equation asserts that (\ref{condL}) is fulfilled, so we may
use equation (\ref{finaltau}) which translates to
%
\begin{equation}
\PP\bigl( Y_{i+1} > (1+ c_1) Y_i |
\FF_i \bigr) > c \qquad\forall1 \leq i < I.
\end{equation}
An application of, say, Hoeffding's inequality gives
%
\begin{equation}
\label{hoeffding} \PP \bigl( Y_{k} < (1+c_1)^{{kc}/{2}}
Y_0 \mbox{ and } I > k \vert \FF_{t_0} \bigr) <
C_3 \exp(-c_3 k),
\end{equation}
where $C_3,c_3>0$ are universal constants. Define $\alpha= \frac{2}{c}
\log_{(1+c_1)} \Delta$. By (\ref{ylbig}), we know that
\[
(1+c_1)^{{kc}/{2}} Y_0 > 5\qquad \forall k > \alpha,
\]
which by definition means that
\[
I > k\quad \Rightarrow \quad Y_{k} \leq(1+c_1)^{{kc}/{2}}
Y_0\qquad \forall k > \alpha.
\]
Equation (\ref{hoeffding}) becomes
\[
\PP( I > k | \FF_{t_0} ) < C_3 \exp(-c_3 k)\qquad
\forall k > \alpha.
\]
Now choose $k$ large enough universal constant such that $k > \alpha$
and also the right-hand side of the above equation is smaller than
$0.01$ (this is possible since $\Delta$ and $c_1$ have been fixed as
universal constants, so $\alpha$ is a universal constant). We get
\[
\PP( I > k | \FF_{t_0} ) < 0.01
\]
and along with (\ref{zqsmall}) this yields
\[
\PP \bigl( Z(T) > 5 R^{k} | \FF_{t_0} \bigr) < 0.01
\]
equation (\ref{mainstepeq}) follows and the proof is complete.
\end{pf*}

The next proposition combines the results of the previous two lemmas
together into a unified bound on the behavior of the process $R(t) =
X(t) / Y(t)$.

\begin{proposition} \label{sumitup}
There exists a universal constant $C >0$ such that the following holds:
Assume that for some $t_0 \geq0$, $X(t_0) / Y(t_0) > C$. Then
\[
\EE\bigl[X(T) / Y(T) | A(t_0) \bigr] < \tfrac{1} 2
X(t_0) / Y(t_0),
\]
where $T = \min\{ s;   X(s) \geq1.3 X(t_0)\}$.
\end{proposition}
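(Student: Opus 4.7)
The approach is to iterate Corollary~\ref{corEt} and Lemma~\ref{noEtLem} along a sequence of stopping times that alternates between the two regimes. Set $\sigma_0 = t_0$ and define inductively: if $E(\sigma_k)$ does not hold, let $\sigma_{k+1} = \min\{s > \sigma_k : X(s) \geq 1.1 X(\sigma_k)\}$; if $E(\sigma_k)$ holds, take $\sigma_{k+1}$ to be the stopping time from Corollary~\ref{corEt} applied at $\sigma_k$, i.e.\ the first $s > \sigma_k$ at which either $E(s)$ fails, $X(s) > 1.1 X(\sigma_k)$, or $Y(s) > 100 Y(\sigma_k)$. By Lemma~\ref{lemstopping} all $\sigma_k$ are almost surely finite, and by construction $X(\sigma_{k+1}) \leq 1.1 X(\sigma_k) + O(Y(\sigma_k))$; since $Y$ is monotone non-decreasing this gives $R(\sigma_{k+1}) \leq 1.1 R(\sigma_k) + O(1)$ pointwise.

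The core estimate is a uniform two-step contraction: conditionally on $\FF_{\sigma_k}$ and on $R(\sigma_k) > \Gamma$ (with $\Gamma$ the constant from Lemma~\ref{noEtLem}),
\[
\EE\bigl[R(\sigma_{k+2}) \mid \FF_{\sigma_k}\bigr] \leq \alpha R(\sigma_k) + O(1),
\]
for a universal $\alpha < \tfrac{1}{2}$. When $E(\sigma_k)$ fails, Lemma~\ref{noEtLem} at $\sigma_k$ gives $\EE[R(\sigma_{k+1}) \mid \FF_{\sigma_k}] \leq \tfrac14 R(\sigma_k)$, and combining with the trivial second-step bound $R(\sigma_{k+2}) \leq 1.1 R(\sigma_{k+1})$ yields $\alpha = 0.275$. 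When $E(\sigma_k)$ holds, split the excursion $[\sigma_k, \sigma_{k+1}]$ by which trigger of Corollary~\ref{corEt} fires: the $X$-trigger (probability at most $0.01$) contributes at most $0.0121\, R(\sigma_k)$ to $\EE[R(\sigma_{k+2})]$ via the trivial two-step bound; the $Y$-trigger forces $R(\sigma_{k+1}) \leq R(\sigma_k)/90$ and hence $R(\sigma_{k+2}) \leq 0.013\, R(\sigma_k)$; the $E$-trigger leaves $E(\sigma_{k+1})$ failing, so Lemma~\ref{noEtLem} at $\sigma_{k+1}$ gives a further factor-$\tfrac14$ contraction, contributing at most $0.275\, R(\sigma_k)$. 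Summing yields $\alpha \leq 0.3$.

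To conclude, each $\sigma$-step grows $X$ by a factor of at most $1.1$, and the bookkeeping on $E$-status shows that within a bounded number of iterations either $\sigma_k \geq T$ or $R(\sigma_k) < \Gamma$: $Y$-triggers immediately push $R$ below $\Gamma$, while an $E$-trigger is succeeded by a Type-A step that does grow $X$. Iterating the two-step contraction enough times that the accumulated factor overwhelms~$1.3$, together with the pointwise comparison $R(T) \leq 1.3\, R(\sigma_{K-1}) + O(1)$ for $K$ the first $\sigma$-index past $T$ (which follows from monotonicity of $Y$ and the single-particle overshoot at $T$, since $X(T) \leq 1.3 X(t_0) + O(Y(T))$ and $Y(T) \geq Y(\sigma_{K-1})$), yields $\EE[R(T) \mid A(t_0)] \leq \tfrac12\, R(t_0)$ for $C$ sufficiently large to both exceed $\Gamma$ and absorb the additive errors. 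The main obstacle is the bookkeeping across iterated stopping times when the $E$-status fluctuates or when $R$ transiently dips below~$\Gamma$; in the latter case one concludes directly from the bound $R(T) \leq 1.3\, X(t_0)/Y(\sigma_k) + O(1)$, which is already tiny.
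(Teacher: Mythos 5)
The ingredients are right (Corollary~\ref{corEt} when $E$ holds, Lemma~\ref{noEtLem} when it fails), but the open-ended iteration is more machinery than this proposition needs, and the bookkeeping required to make it rigorous is glossed over. The paper's proof stops after exactly two excursions: when $E(t_0)$ holds, it writes $T_1$ for the corollary's stopping time at $t_0$ and $T_2 = \min\{s > T_1 : X(s) > 1.1\,X(T_1) \text{ or } Y(s) > 100\,Y(t_0)\}$. For $C$ large the single-particle overshoot is $O(Y) \leq 0.01\,X(t_0)$, so $X(T_2) \leq 1.3\,X(t_0)$ holds \emph{deterministically}; hence $T \geq T_2$ always, $Y(T)\geq Y(T_2)$, and the comparison from $T_2$ to $T$ is a pointwise one, so it suffices to bound $\EE[R(T_2)]$. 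Splitting at $T_1$ on which trigger fired — and applying Lemma~\ref{noEtLem} only once, at the single time $T_1$ where $R(T_1) > \Gamma$ is guaranteed by $C$ large — gives $\EE[R(T_2)] \leq 0.36\, R(t_0)$, and the proof ends there.

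Your iteration discards this deterministic link between the second excursion and $T$, and so must handle a random number of excursions, which creates real gaps. First, applying Lemma~\ref{noEtLem} at $\sigma_{k+1}$ needs $R(\sigma_{k+1}) > \Gamma$, and this does not follow from your standing hypothesis $R(\sigma_k) > \Gamma$: even without a $Y$-trigger firing, $Y(\sigma_{k+1})$ can be essentially $100\,Y(\sigma_k)$, so $R(\sigma_{k+1})$ can drop to roughly $R(\sigma_k)/100$; you would need $R(\sigma_k) > 100\Gamma$ or a separate case for $R(\sigma_{k+1})\leq\Gamma$. Second, the termination claim that ``$Y$-triggers immediately push $R$ below $\Gamma$'' is false when $R(\sigma_k) \gg \Gamma$, and a string of $Y$-triggers need not grow $X$ at all, so the number of $\sigma$-steps inside $[t_0,T]$ is not uniformly bounded — it depends on $R(t_0)$ — and passing the two-step contraction through a random number of steps together with the $O(1)$ additive errors requires an optional-stopping / geometric-series argument that you do not give. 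Neither issue is fatal, but repairing them amounts to redoing exactly the kind of accumulated-error accounting that Theorem~\ref{mainsec4} already performs at the next level, and the proposition is designed so that the two-excursion scheme makes that unnecessary here.
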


Once we have established the above lemmas, the idea of the proof is
very simple: just split into two cases, determined by whether or not
there is a point in time at which $X(t)$ has not yet reached the value
$1.1 X(t_0)$ and the event $E(t)$ does not hold. If such a point
exists, we use Lemma \ref{noEtLem}, otherwise, we use Corollary \ref{corEt}.

\begin{pf*}{Proof of Proposition \ref{sumitup}}
If the event $E(t_0)$ does not hold, just use Lemma \ref{noEtLem} with
the legitimate assumption that $C > \Gamma$ and we are done. Otherwise, denote
\[
T_1 = \min\bigl\{s>t_0; E(s) \mbox{ does not hold or }
X(s) > 1.1 X(t_0) \mbox{ or } Y(s) > 100 Y(t_0) \bigr\}
\]
and,
\[
T_2 = \min\bigl\{s>T_1; X(s) > 1.1 X(T_1)
\mbox{ or } Y(s) > 100 Y(t_0) \bigr\}.
\]
Using Lemma \ref{ballslem} and since we are stopping before $Y(s)$ has
reached the height $100 Y(t_0) \leq100 C^{-1} X(t_0)$,
we see that by taking the constant $C$ to be large enough, we can make
sure that any particle added to the aggregate before time $T_2$ can
increase $X(t)$ by no more than $0.01 X(t_0)$. Since almost surely only
one particle can be added at a time, and assuming that $C$ is a large
enough constant, we get
%
\begin{equation}
\label{xincrease} X(T_2) \leq1.3 X(T_0).
\end{equation}
Denote by $F$ the event that $E(T_1)$ holds. By Corollary \ref{corEt},
we know that
\[
\PP\bigl(X(T_1) \geq1.1 X(t) | \FF_t \bigr) \leq0.01.
\]
We can estimate
%
\begin{eqnarray}
\label{est1}&& \EE\bigl[X(T_2) / Y(T_2)
\mathbf{1}_{F} | \FF_{t_0} \bigr]
\nonumber
\\[-8pt]
\\[-8pt]
\nonumber
&&\qquad\leq
0.01 \cdot1.3 X(t_0) / Y(t_0) + \tfrac{1}{100} 1.3
X(t_0) / Y(t_0),
\end{eqnarray}
where we have used that fact that by definition of $T_1$ whenever
$X(t_0) < 1.1$ and $F$ holds, then necessarily $Y(T_1) > 100 Y(t_0)$.

Next, we handle the case that $F$ does not hold. By assuming that $C$
is large enough, we can assume that $X(T_1) / Y(T_1) > \Gamma$ (the
universal constant in the formulation of Lemma \ref{noEtLem}). An
application of Lemma \ref{noEtLem} gives
\[
\EE\bigl[X(T_2) / Y(T_2) \mathbf{1}_{F^C} |
\FF_{t_0} \bigr] \leq\tfrac{1}{4} \EE \bigl[ X(T_1) /
Y(T_1) \mathbf{1}_{F^C} | \FF_{t_0} \bigr] \leq
\tfrac{1.3}{4} X(t_0) / Y(t_0).
\]
Combining this bound with (\ref{est1}) gives us
\begin{eqnarray*}
&&\EE\bigl[X(T_2) / Y(T_2) | \FF_{t_0} \bigr]\\
&&\qquad
\leq
\bigl(0.013 + \tfrac{1.3}{100} + \tfrac{1.3}{4} \bigr)
X(t_0) / Y(t_0) \leq0.36 X(t_0) /
Y(t_0),
\end{eqnarray*}
by the same argument as the one preceding (\ref{xincrease}), one has
$X(T) < 1.2 X(T_2)$, which gives us the desired result.
\end{pf*}

We are finally in a position to prove the main theorem of this section.

\begin{pf*}{Proof of Theorem \ref{mainsec4}}
Define
\[
E_i = \bigl\{X(\tau_i) / Y(\tau_i) > C \bigr
\}
\]
and $E_0 = \{X(t) / Y(t) > C \}$, where $C$ is a universal constant
whose value will be determined later on. Observe that if for some $i$,
we have $X(\tau_i) > 1.01 \times2^i X_0$, it means that the last jump
in $X(t)$ must have been rather big, namely that for the smallest
integer $j$ such that $X(\tau_j) = X(\tau_i)$, one has
\[
X(\tau_j) > 1.01 \lim_{\varepsilon\to0^+ } X(\tau_j
- \varepsilon)
\]
[here we used the assumption that $X_0 \geq X(t)$]. This, in turn,
means that the radius of the last ball added was proportional to
$X(\tau
_i)$. By Lemma \ref{ballslem}, we learn that in that case, $X(\tau_i) /
Y(\tau_i)$ cannot be larger than some universal constant, say $C_1$.
In other words, by picking the constant $C$ to be large enough, we can
ensure that
%
\begin{equation}
\label{Eixsmall} E_i \mbox{ holds}\quad \Rightarrow\quad X(\tau_i)
< 1.01 \times2^i X_0.
\end{equation}
Otherwise, if $X(\tau_i) \leq1.01 \times2^i X_0$ then we necessarily
have $X(\tau_i) \leq\break 2.02 X(\tau_{i-1})$.
It follows that for all $i \geq1$, either $E_i$ does not hold or
$R(\tau_i) \leq2.02 R(\tau_{i-1})$, and consequently
%
\begin{equation}
\label{eqnoei} \EE\bigl[R(\tau_{i}) \mathbf{1}_{E_{i-1}^C} | A(t)
\bigr] \leq C + 2.02 C \leq4C.
\end{equation}
Next, we deal with the case that $E_{i-1}$ holds. By choosing the
constant $C$ to be large enough, we can use Proposition \ref{sumitup}
to get
\[
\EE\bigl[R(T) \mathbf{1}_{E_{i-1}} | A(t) \bigr] \leq\tfrac{1}{2}
\EE\bigl[R(\tau _{i-1}) | A(t) \bigr],
\]
where
\[
T = \min\bigl\{s \geq\tau_{i-1}; X(s) \geq1.3 X(\tau_{i-1})
\bigr\}.
\]
Now, equation (\ref{Eixsmall}) teaches us that
\begin{eqnarray*}
&&\EE\bigl[ R(\tau_i) \mathbf{1}_{E_i} \mathbf{1}_{E_{i-1}}
| A(t) \bigr]\\
&&\qquad \leq
\frac{1.01 \times2}{1.3} \EE\bigl[R(T) | A(t)\bigr] \leq0.9 \EE\bigl[R(\tau
_{i-1}) | A(t) \bigr],
\end{eqnarray*}
and, therefore,
\begin{eqnarray*}
\EE\bigl[ R(\tau_i) \mathbf{1}_{E_{i-1}} | A(t)\bigr]& =& \EE
\bigl[ R(\tau_i) \mathbf {1}_{E_i^C} \mathbf{1}_{E_{i-1}}
| A(t)\bigr] + \EE\bigl[ R(\tau_i) \mathbf {1}_{E_i}
\mathbf{1}_{E_{i-1}} | A(t)\bigr]\\
& \leq&
C + 0.9 \EE\bigl[R(\tau_{i-1}) | A(t) \bigr].
\end{eqnarray*}
Together with (\ref{eqnoei}), we get
\[
\EE\bigl[ R(\tau_i) | A(t) \bigr] \leq5C + 0.9 \EE\bigl[ R(
\tau_{i-1}) | A(t) \bigr].
\]
This completes the proof of the theorem.
\end{pf*}

\section{Proof of the main theorem}\label{sec5}

In this section, we finally prove Theorem \ref{mainthm}. We begin with
a lemma which roughly claims that the probability of the aggregate to
intersect a any metric ball whose radius is large enough, is close to
$1$, no matter how far the ball is from the origin of the aggregate.
The proof is a consequence of the tools developed in the previous
section; we show that by choosing a suitable embedding of the aggregate
into the Poincar\'{e} half-plane, the question of intersecting a
specific metric ball boils down to the fact that $Y(t)$ grows rapidly
enough compared to $X(t)$.

\begin{lemma} \label{lemmain}
There exists a universal constant $R_0 > 0$ such that the following holds:
Given any time $t \geq0$ and any finite starting aggregate, $A(t)$,
which started from a point $p \in\HH$, there exists a number $L>0$
such that for any point $p'$ with $d(p,p') \geq L$ one has
\[
\PP\bigl(A(\infty) \cap B_H\bigl(p', R_0
\bigr) \neq\varnothing | A(t)\bigr) \geq0.99.
\]
\end{lemma}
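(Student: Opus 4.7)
The plan is to exploit the conformal invariance of the model together with Theorem \ref{mainsec4}. Using the three-dimensional isometry group of $\HH$, I would choose an embedding $H\colon \PH \to \HH$ with $H^{-1}(p) = (0, \epsilon)$ and $H^{-1}(p') = (0,1)$; since $d_H((0,\epsilon),(0,1)) = \log(1/\epsilon)$, this forces $\epsilon = e^{-d_H(p,p')} \leq e^{-L}$, so $\epsilon$ can be made arbitrarily small by taking $L$ large. Fix a large universal constant $C$ (to be chosen at the end) and set $\Psi = [-C,C] \times [1/80,\,1]$. The rectangle $\Psi$ has bounded hyperbolic diameter around $p'=(0,1)$, so there is a universal $R_0$ with $\Psi \subseteq B_H(p', R_0)$, and it suffices to show $\PP\bigl(A(\infty) \cap \Psi \neq \emptyset \mid A(t)\bigr) \geq 0.99$.

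For the geometric reduction, I would introduce the stopping times $\sigma = \min\{s : Y(s) \geq 1/80\}$ and $T_C = \min\{s : X(s) \geq C\}$. The multiplicative growth bound implicit in Lemma \ref{ballslem} (a newly added particle's $y$-coordinate is at most $8$ times that of its nearest neighbor, via $B_H(q,2) \subseteq B_E(q, 7 y_q)$) forces the particle added at time $\sigma$ to have $y_{\mathrm{new}} \in [1/80, 1/10]$; if additionally $|x_{\mathrm{new}}| \leq C$, then this particle lies in $\Psi$. Hence on the event $\{A(\infty) \cap \Psi = \emptyset\}$ one must have $|x_{\mathrm{new}}| > C$ at time $\sigma$, forcing $T_C \leq \sigma$. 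Examining the two subcases $T_C < \sigma$ and $T_C = \sigma$ (the latter again using the multiplicative jump bound to conclude $Y(T_C) \leq 1/10$), one deduces $R(T_C) = X(T_C)/Y(T_C) \geq 10 C$ on this event.

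It remains to argue that $\{R(T_C) \geq 10 C\}$ has probability at most $0.01$. Since $A(t)$ is contained in the hyperbolic ball $B_H((0,\epsilon), D)$ with $D$ the hyperbolic diameter of $A(t)$, the explicit Euclidean form of hyperbolic balls in $\PH$ gives $R(t) \leq \sinh D$ and $X(t) \leq \epsilon \sinh D$. Apply Theorem \ref{mainsec4} with $X_0 = X(t)$ and $i^* = \lceil \log_2(C/X(t)) \rceil$ so that $\tau_{i^*} \geq T_C$; iterating the recursion $\EE[R(\tau_{i+1}) \mid A(t)] \leq C_* + 0.9\, \EE[R(\tau_i) \mid A(t)]$ yields $\EE[R(\tau_{i^*}) \mid A(t)] \leq 10 C_* + 0.9^{i^*} R(t)$, where $C_*$ is the theorem's universal constant. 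Choosing $L$ as a function of $D$ (so that $L \gtrsim D$) makes $i^*$ large enough that this bound is at most $11 C_*$. A short additional argument, again invoking the multiplicative jump bound, passes from the dyadic scale $\tau_{i^*}$ back down to $T_C$; then taking $C$ large enough as a universal constant and applying Markov's inequality gives $\PP(R(T_C) \geq 10 C \mid A(t)) \leq 0.01$, which completes the proof. The main technical obstacle is precisely this last step of matching the stopping time $T_C$ (used in the geometric reduction) with the dyadic scale $\tau_{i^*}$ at which Theorem \ref{mainsec4} controls $R$; the multiplicative control over the jump sizes of $X$ and $Y$ already established via Lemma \ref{ballslem} is what makes this matching straightforward.
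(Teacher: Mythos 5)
Your proposal follows essentially the same strategy as the paper (normalize via a M\"obius isometry so the target ball becomes a fixed rectangle $\Psi$ in the half-plane, then use Theorem \ref{mainsec4} and Markov's inequality to show that $Y$ must cross the lower edge of $\Psi$ before $X$ escapes its sides), but the final step you dismiss as ``a short additional argument'' is where the real work is, and the fix you gesture at does not go through as stated.

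The gap is in passing from $\tau_{i^*}$ back down to $T_C$. Theorem \ref{mainsec4} controls $\EE[R(\tau_{i^*})\mid A(t)]$, not $\EE[R(T_C)\mid A(t)]$, and $R$ is not monotone: both $X$ and $Y$ are nondecreasing, so between $T_C$ and $\tau_{i^*}$ the height $Y$ can grow by an arbitrarily large factor. Concretely, the bad event you want to bound is $\{Y(T_C)<1/80,\ X(T_C)\geq C\}$, but by time $\tau_{i^*}$ the aggregate may have grown upward so that $Y(\tau_{i^*})$ is large and $R(\tau_{i^*})$ is small, even though $R(T_C)$ was enormous. The ``multiplicative jump bound'' from Lemma \ref{ballslem} constrains a single added particle (the ratio $y_{\mathrm{new}}/y_{\mathrm{old}}\leq 8$ and the additive jump in $X$ is $\leq 7y$), but it says nothing about the cumulative growth of $Y$ over the possibly many particles added between $T_C$ and $\tau_{i^*}$. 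So $R(T_C)\geq 10C$ does not imply anything useful about $R(\tau_{i^*})$, and Markov on $R(\tau_{i^*})$ does not bound $\PP(R(T_C)\geq 10C)$.

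The correct repair, which is exactly what the paper does, is to run the geometric reduction at the dyadic time itself rather than at $T_C$. Define $\sigma=\min\{s:Y(s)\geq 1/80\}$; the argument should show that on the good event $\{\sigma<\tau_{i^*}\}$ the particle added at time $\sigma$ lands in a suitably widened rectangle. Indeed, on $\{\sigma<\tau_{i^*}\}$, the jump bound gives $X(\sigma)\leq X(\tau_{i^*})\leq 2^{i^*}X(t)+7\,Y(\tau_{i^*})$, and with $i^*=\lceil\log_2(C/X(t))\rceil$ this is $\leq 2C+O(1)$, so the new particle lies in $[-3C,3C]\times[1/80,1/10)$ (a rectangle still of bounded hyperbolic diameter about $p'$, hence still inside $B_H(p',R_0)$ after adjusting $R_0$). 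The complementary bad event is then $\{\sigma\geq\tau_{i^*}\}\subseteq\{Y(\tau_{i^*})<1/80,\ X(\tau_{i^*})\geq C\}\subseteq\{R(\tau_{i^*})\geq 80C\}$, and Markov's inequality applied to $\EE[R(\tau_{i^*})\mid A(t)]\leq 11C_*$ (for $C$ a large universal constant) finishes the proof. The rest of your argument --- the choice of $\epsilon$, the bound $R(t)\leq\sinh D$, the iterated recursion, and the requirement $L\gtrsim D$ to make $i^*$ large --- is sound and mirrors the paper's computation.
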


\begin{pf}
Denote by $D$ the $d_H$-diameter of $A(t)$, and define
\[
M = \max\bigl\{x; \exists y>0 \mbox{ such that } d_H\bigl((x,y),
(0,1)\bigr) < D \bigr\}.
\]
For any two points $p_1,p_2 \in\HH$, there is a (unique up to
orientation) isometric embedding $\phi\dvtx \HH\to\PH$ such that $\phi
(p_1) = (0,1)$ and
$\phi(p_2) = (0,S)$ for some $S \geq1$. So given the starting point
of the aggregate, $p$, and an arbitrary point $p'$ satisfying $d(p,p')
\geq L$ (where $L$ is a constant whose value will be determined later
on), we may therefore assume without loss of generality that $H^{-1}(p)
= (0,1)$ and that $H^{-1}(p') = (0,S)$. Consider the metric ball
\[
B = B_H\bigl((0,S), R_0\bigr).
\]
Clearly, if $R_0$ is a large enough universal constant, this ball will
contain a rectangle of the form
\[
\Psi= [- 3 R_1 S, 3 R_1 S] \times[S, 2 S] \subset\PH,
\]
where $R_1$ is a universal constant whose value will be chosen later on
(see Figure~\ref{figlem} for an illustration). Also consider the
stopping times
\[
\tau_i = \min\bigl\{s; X(s) \geq2^i M \bigr\}\qquad \forall i
\in\mathbb{N}.
\]
By the definition of $M$ we have $X(t) \leq M$, and thus by Theorem
\ref
{mainsec4} we know that for all $i \geq1$,
%
\begin{equation}
\label{rdrops1} \EE\bigl[R(\tau_{i+1}) | A(t)\bigr] < C + 0.9 \EE
\bigl[R(\tau_i) | A(t)\bigr]
\end{equation}
\begin{figure}

\includegraphics{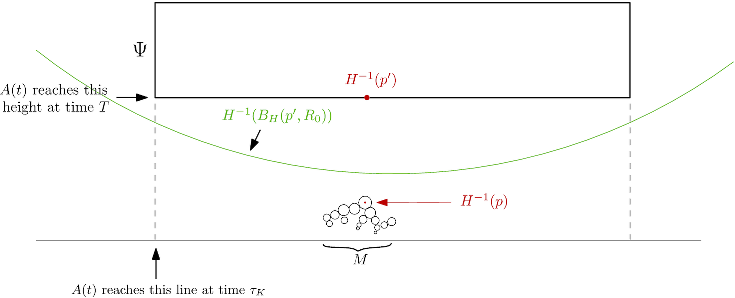}

\caption{The geometric definitions of Lemma \protect\ref{lemmain}.} \label{figlem}
\end{figure}
\hspace*{-3pt}for a universal constant $C>0$, which implies that
%
\begin{equation}
\label{rdrops2} \EE\bigl[R(\tau_i) | A(t)\bigr] \geq C_1
\quad\Rightarrow\quad\EE\bigl[R(\tau_{i+1}) | A(t)\bigr] \leq0.95 \EE\bigl[R(
\tau_i) | A(t)\bigr]
\end{equation}
for $C_1 = 20 C$. Next, if $R(\tau_0) > 2 M$, then necessarily by Lemma
\ref{ballslem} it means
that $Y(\tau_0) > c X(\tau_0)$ for a universal constant $c>0$ [since
almost surely only one particle is added at a time, and the increment
in $X(s)$ is not larger than a constant times $Y(s)$]. We deduce that
\begin{eqnarray*}
\EE\bigl[R(\tau_0) | A(t)\bigr] &=& \EE\bigl[R(\tau_0)
\mathbf{1}_{R(\tau_0) < 2
M} | A(t)\bigr] + \EE\bigl[R(\tau_0)
\mathbf{1}_{R(\tau_0) \geq2 M} | A(t)\bigr]\\
& \leq&
2M / Y(t) + c^{-1} \leq2 M + C_2
\end{eqnarray*}
for a universal constant $C_2 > 0$. Together with (\ref{rdrops1}) and
(\ref{rdrops2}), it gives
\[
\EE\bigl[R(\tau_i) | A(t)\bigr] \leq C_1\qquad \forall i
\geq\Theta
\]
with $\Theta= \max (\log_{0.95} \frac{C_1}{ 2 M + C_2}, 1
)$. Denote $K = \lceil\log_2(R_1 S / M) \rceil$.
Recall that we are free to take the constant $L$ as large as we want,
which ensures us that the number $S$ can be as large as we like thanks
to the assumption $d_H(p,p') \geq L$. Now, since the number $M$ does
not depend on the point $p'$ [but only on the aggregate $A(t)$], by
taking $L$ to be large enough, it is legitimate to assume that
\[
K \geq\Theta.
\]
With this assumption, we get
\[
\EE\bigl[R(\tau_K) | A(t)\bigr] \leq C_1,
\]
and also by the definition $\tau_i$,
\[
X(\tau_K) \geq M 2^{K} \geq R_1 S.
\]
These two equations combined yield
\[
\PP\bigl[Y(\tau_K) < S | A(t) \bigr] < C_1 /
R_1 < 0.01,
\]
where the last inequality can be attained by making sure that $R_1$ is
a large enough universal constant (note that the value of $C_1$ has
already been fixed and thus does not depend on $R_1$). Defining
\[
T = \min\bigl\{s; Y(s) \geq S \bigr\},
\]
the previous equation becomes
%
\begin{equation}
\label{tauksmall} \PP\bigl(T > \tau_K | A(t) \bigr) < 0.01.
\end{equation}
On the other hand, another application of Lemma \ref{ballslem} with the
fact that only one particle is added at a time almost surely, teaches
us that
\[
X(T \wedge\tau_K) \leq2 R_1 S + C_3 S
\]
for a universal constant $C_3 > 0$, and by choosing that $R_1$ to be
large enough we can assert that
\[
X(T \wedge\tau_K) \leq3 R_1 S
\]
almost surely, without affecting the correctness of the above. Using
the last equation and the definition of $\Psi$, it is easy to check
that we have the implication
\[
T < \tau_K \quad\Rightarrow \quad A(T) \cap\Psi\neq\varnothing\quad\Rightarrow\quad A(T)
\cap B_H\bigl(p', R_0\bigr) \neq
\varnothing.
\]
In light of equation (\ref{tauksmall}), this completes the proof.
\end{pf}

We are finally ready to prove the main theorem.

\begin{pf*}{Proof of Theorem \ref{mainthm}}
The main idea of the proof is to use the previous lemma iteratively, in
order to prove that there exists a random sequence of radii $L_1 \leq
L_2 \leq\cdots $ such that $L_i \to\infty$ almost surely and
a random sequence of stopping times $T_1 \leq T_2 \leq\cdots$ such that
for all $i \geq1$, almost surely
%
\begin{equation}
\label{eq23} \qquad\PP \bigl( \# \bigl( A(\infty) \cap B_H(p_0,
L_{i+1}) \bigr) \geq c \operatorname{Vol}_H\bigl(B_H(p_0,
L_{i+1})\bigr) \mid A(T_{i}), L_i \bigr) \geq c,
\end{equation}
where $c>0$ is a universal constant and $p_0$ is the starting point of
the aggregate. This will clearly complete the proof, since it implies
that with probability one there exists a subsequence of radii $\{
L_{i_k}\}_{k=1}^\infty$ such that
\[
\# \bigl(A(\infty) \cap B_H(p_0, L_{i_k})
\bigr) \geq c \operatorname{Vol}_H\bigl(B_H(p_0,
L_{i_k})\bigr) \qquad \forall k \in\mathbb{N}
\]
for a universal constant $c>0$.

We build these sequences inductively. We begin with $L_1 = 1$ and $T_1
= 0$. Suppose $L_i$, $T_i$ and $A(T_i)$ are known. We use the previous
lemma with $A(t) = A(T_i)$ as a starting aggregate. The result of the
lemma ensures the existence of a number $L$ such that
%
\begin{equation}
\label{fulldisc} \PP\bigl(A(\infty) \cap B_H(p, R_0)
\neq\varnothing | A(T_i)\bigr) \geq0.99
\end{equation}
for all $p$ such that $d_H(p_0,p) \geq L$. Take $L_{i+1} = \max\{2 L_i,
2L \}$. Now consider a maximal set of disjoint metric balls
of radius $R_0$ whose centers lie within the annulus $B_H(p_0,L_{i+1})
\setminus B_H(p_0,L_{i+1} / 2)$. Denote the centers of these balls by
$p_1,\ldots,p_N$ so that $N$ is the number of balls in this packing. By
the maximality of this set, it is obvious that we have
\[
B_H(p_0,L_{i+1}) \setminus
B_H(p_0,L_{i+1} / 2) \subset\bigcup
_{i=1}^N B_H(p_i, 2
R_0).
\]
Consequently,
%
\begin{equation}
\label{Nbig} \qquad N \geq\frac{\operatorname{Vol}_H  ( B_H(p_0,L_{i+1}) \setminus B_H(p_0,L_{i+1} /
2)  )}{\operatorname{Vol}_H(p_0, 2 R_0)} \geq c_1 \operatorname{Vol}_H
\bigl(B_H(p_0, L_{i+1})\bigr)
\end{equation}
for a universal constant $c_1 > 0$. Define
\[
M(t) = \# \bigl\{j \in\{1,\ldots,N \}; B_H(p_j,
R_0) \cap A(t) \neq \varnothing \bigr\}
\]
and note that, since the balls $B_H(p_j, R_0)$ are disjoint, we have that
%
\begin{equation}
\label{eqcard} \# \bigl( A(t) \cap B_H(p_0,
L_{i+1}) \bigr) \geq M(t)\qquad \forall t \geq T_i.
\end{equation}
Equation (\ref{fulldisc}) ensures that $\EE[M(\infty) | A(T_i)] \geq
0.99 N$. It then follows from Markov's inequality that
\[
\PP\bigl(M(\infty) > N/2 | A(T_i)\bigr) > \tfrac{1}{2}.
\]
By $\sigma$-additivity, there exists a number $T > 0$ such that
\[
\PP\bigl(M(T) > N/2 | A(T_i)\bigr) > \tfrac{1}{2}.
\]
Set $T_{i+1} = T$. Together with equations (\ref{Nbig}) and (\ref
{eqcard}), this establishes (\ref{eq23}). Note that $L_{i+1}$ and
$T_{i+1}$ only depended on $L_i, T_i$ and $A(T_i)$, and therefore the
conditioning on $A(T_{i})$ and $L_i$ in formula (\ref{eq23}) is
legitimate.

The proof is complete.
\end{pf*}

\section*{Acknowledgements} I would like to thank Itai Benjamini for very
fruitful discussions and for introducing me to the DLA model. I would
also like to thank Yuval Peres and the anonymous referee for their very
useful comments which helped me improve the presentation of this note.


%





\printaddresses

\end{document}